\documentclass{amsart}

\usepackage{amsmath, amsfonts, amsthm, amssymb}
\usepackage{graphicx}
\usepackage{subcaption}
\usepackage{mathrsfs}
\usepackage{bm}
\usepackage{tikz}
\usepackage{tikz-3dplot}
\usepackage{tikz-qtree}
\usepackage{forest}

\usepackage{libertine}
\usepackage[libertine]{newtxmath}

\usepackage{natbib}

\usepackage[
    colorlinks=true, 
    linkcolor=red,
    citecolor=blue
]{hyperref}

\usepackage{orcidlink}

\newtheorem{theorem}{Theorem}[section]
\newtheorem{proposition}[theorem]{Proposition}

\newtheorem{lemma}[theorem]{Lemma}
\theoremstyle{definition}
\newtheorem{definition}[theorem]{Definition}
\newtheorem{example}[theorem]{Example}
\newtheorem{remark}[theorem]{Remark}
 
\usepackage{bussproofs}

\usepackage{enumitem}

\setlist[itemize]{
    labelindent=0em, 
    leftmargin=3\parindent, 
    labelsep=0pt, 
}

\setlist[enumerate]{
    labelindent=0em, 
    leftmargin=3\parindent, 
    labelsep=0pt, 
    label=\makebox[2em]{\centering(\arabic*)}, 
}

\setlist[enumerate, 2, 3, 4]{
    labelindent=0em, 
    leftmargin=\parindent, 
    labelsep=0pt, 
    label=\makebox[2em]{\centering\arabic*$^\circ$} , 
}

\setlist[itemize, 2, 3, 4, 5]{
    labelindent=0em, 
    leftmargin=\parindent, 
    labelsep=0pt, 
}

\newlist{enumerate-abc}{enumerate}{3}
\setlist[enumerate-abc]{
    labelindent=0em, 
    leftmargin=3\parindent, 
    labelsep=0pt, 
    label=\makebox[2em]{\centering(\alph*)}
}

\newcommand{\figConstructionOfDeductionTree}{%
    \begin{figure}[htbp]
        \centering
        \begin{subfigure}{0.9\textwidth}
            \centering
            \begin{tikzpicture}[
                level distance=40pt, 
                sibling distance=20pt  
            ]%
                \Tree[ .{$\calH\mid \Gamma\tto B\to C, \Delta$}
                    {$\calH\mid \Gamma\tto B\to C, \Delta\mid \Gamma\tto\Delta$~~~~~}
                    \edge node[auto] {\small stage $B\to C$}; 
                    {~~~~~$\calH\mid \Gamma\tto B\to C, \Delta\mid \Gamma, B\tto C, \Delta$}
                ]
            \end{tikzpicture}
            \caption{$\cdots \tto B\to C, \cdots$}
            \label{subfig:construction of deduction tree-⇒→}
        \end{subfigure}

        \vspace{1em}

        \begin{subfigure}{0.9\textwidth}
            \centering
            \begin{tikzpicture}[
                level distance=40pt, 
                sibling distance=20pt  
            ]%
                \Tree[ .{$\calH\mid \Gamma, B\to C\tto \Delta$}
                    \edge node[left] {\small stage $B\to C$}; 
                    {$\calH\mid \Gamma, B\to C\tto \Delta\mid \Gamma\tto \Delta \mid \Gamma, C\tto B, \Delta$}
                ]
            \end{tikzpicture}
            \caption{$\cdots, B\to C \tto \cdots$}
            \label{subfig:construction of deduction tree-→⇒}
        \end{subfigure}

        \vspace{1em}

        \begin{subfigure}{0.49\textwidth}
            \centering
            \begin{tikzpicture}[
                level distance=40pt, 
                sibling distance=20pt  
            ]%
                \Tree[ .{$\calH\mid\Gamma\tto \exists x B(x), \Delta$}
                \edge node[right] {\small ~~stage $(\exists x B(x), t_i)$}; 
                    {$\calH\mid\Gamma\tto \exists x B(x), \Delta\mid\Gamma\tto B(t_i), \Delta$}
                ]
            \end{tikzpicture}
            \caption{$\cdots \tto \exists x B(x), \cdots$}
            \label{subfig:construction of deduction tree-⇒∃}
        \end{subfigure}
        \hfill
        \begin{subfigure}{0.49\textwidth}
            \centering
            \begin{tikzpicture}[
                level distance=40pt, 
                sibling distance=20pt  
            ]%
                \Tree[ .{$\calH\mid \Gamma, \exists x B(x)\tto \Delta$}
                \edge node[left] {\small ~~stage $\exists x B(x)$}; 
                    {$\calH\mid \Gamma, \exists x B(x)\tto \Delta\mid \Gamma, B(a)\tto \Delta$}
                ]
            \end{tikzpicture}
            \caption{$\cdots, \exists x B(x)\tto \cdots$}
            \label{subfig:construction of deduction tree-∃⇒}
        \end{subfigure}

        \caption{Construction of $\DT_n(T, \calG)$}
        \label{fig:construction of deduction tree}
    \end{figure}
}

\newcommand{\figExampleOfRepeatedFormula}{%
    \begin{figure}[ht]
        \begin{tikzpicture}[
            scale=0.6, transform shape,
            level distance=46pt, 
            sibling distance=10pt  
        ]%
            \Tree
            [.{$\calH\mid \Gamma_1, B^1\to^1 C^1 \tto \Delta_1\mid \Gamma_2 \tto B^2\to^2 C^2, B^3\to^3 C^3, \Delta_2$}
                \edge node[left] {$B^1\to^1 C^1$}; 
                [ .{$\calG^{\sigma}\mid \Gamma_1 \tto \Delta_1\mid \Gamma_1, C^1 \tto B^1, \Delta_1$}
                    [ .{$\calG^{\sigma1}\mid \Gamma_2 \tto B^3\to^3 C^3, \Delta_2$}
                        {$\calG^{\sigma11}\mid \Gamma_2 \tto B^2\to^2 C^2, \Delta_2$}
                        \edge node[auto] {~~~~~~$B^3\to^3 C^3$}; 
                        {$\calG^{\sigma11}\mid \Gamma_2, B^3 \tto C^3, B^2\to^2 C^2, \Delta_2$}
                    ]
                    \edge node[auto] {$B^2\to^2 C^2$}; 
                    [ .{$\calG^{\sigma1}\mid \Gamma_2, B^2 \tto C^2, B^3\to^3 C^3, \Delta_2$}
                        {$\calG^{\sigma12}\mid \Gamma_2\tto B^2\to^2 C^2, \Delta_2$}
                        \edge node[auto] {~~~~~~$B^3\to^3 C^3$}; 
                        {$\calG^{\sigma12}\mid \Gamma_2, B^3 \tto C^3, B^2\to^2 C^2, \Delta_2$}
                    ]
                ]
            ]
        \end{tikzpicture}
        \caption{The subtree of $\sigma$ in Example~{\ref{eg: construction for the formula appearing many times}}}
        \label{fig:construction for the formula appearing many times}
    \end{figure}
}

\newcommand{\figDeductionTreeOfDrinkerParadox}{
    \begin{figure}[htb]
        \centering
        \hspace{6em}
        \begin{tikzpicture}[scale=1, transform shape, level distance=5em]
            \Tree
            [. $\tto \exists x\forall y(P(x)\to P(y))$ 
                \edge node[right] {~stage $(\exists x\forall y(P(x)\to P(y)), a)$};
                [. $\calG^\varepsilon \mid {{} \tto \forall y(P(a)\to P(y))}$ 
                    \edge node[right] {~stage $\forall y(P(a)\to P(y))$};
                    [. {$\calG^{1} \mid {{} \tto P(a)\to P(b)}$} 
                        \edge node[right] {~stage $(\exists x\forall y(P(x)\to P(y)), b)$};
                        [. {$\calG^{11} \mid {{} \tto \forall y(P(b)\to P(y))}$} 
                            \edge node[right] {~stage $\forall y(P(b)\to P(y))$};
                            [. {$\calG^{111} \mid {{} \tto P(b)\to P(c)}$} 
                            ]
                        ]
                    ]
                ]
            ]
        \end{tikzpicture}
        \caption{Deduction tree $\DT_2(\varnothing, {}\tto \exists x\forall y(P(x)\to P(y)))$}
        \label{fig:example of 2-deduction tree of drinker's paradox}
    \end{figure}
}

\newcommand{\figOriginalDerivationOfDrinkerParadox}{
    \begin{figure}[htb]
        \centering
        \AxiomC{$\cdots$}
        \RightLabel{\makebox[1em]{\hspace{8.5em} Completeness of $\GL$}}
        \UnaryInfC{$\prop(\calG^{1111})_{\frac12}$}
        \RightLabel{(EW)}
        \UnaryInfC{$\calG^{1111}_{\frac12}$}
        \RightLabel{($\tto_{\frac12}{\forall}$)}
        \UnaryInfC{$\calG^{111}_{\frac12}\mid {{} \tto_{\frac12} \forall y(P(b)\to P(y))}$}
        \RightLabel{(EC)}
        \UnaryInfC{$\calG^{111}_{\frac12}$}
        \RightLabel{($\tto_{\frac12}{\exists}$)}
        \UnaryInfC{$\calG^{11}_{\frac12} \mid {{} \tto_{\frac12} \exists x\forall y(P(x)\to P(y))}$}
        \RightLabel{(EC)}
        \UnaryInfC{$\calG^{11}_{\frac12}$}
        \RightLabel{($\tto_{\frac12}{\forall}$)}
        \UnaryInfC{$\calG^1_{\frac12} \mid {{} \tto_{\frac12} \forall y (P(a)\to P(y))}$}
        \RightLabel{(EC)}
        \UnaryInfC{$\calG^1_{\frac12}$}
        \RightLabel{($\tto_{\frac12}{\exists}$)}
        \UnaryInfC{$\calG^{\varepsilon}_{\frac12} \mid {{} \tto_{\frac12} \exists x\forall y(P(x)\to P(y))}$}
        \RightLabel{(EC)}
        \UnaryInfC{$\calG^{\varepsilon}_{\frac12}$}
        \DisplayProof
        \caption{A derivation of ${}\tto_{\frac12} \exists x\forall y(P(x)\to P(y))$}
        \label{fig:strict derivation}
    \end{figure}
}

\newcommand{\figSimplifiedDerivationOfDrinkerParadox}{
    \begin{figure}[htb]
        \centering
        \AxiomC{$\cdots$}
        \RightLabel{Completeness of $\GL$}
        \UnaryInfC{${} \tto_{\frac12} P(a)\to P(b)\mid {}\tto_{\frac12} P(b)\to P(c)$}
        \RightLabel{($\tto_{\frac12}{\forall}$)}
        \UnaryInfC{${} \tto_{\frac12} P(a)\to P(b)\mid {}\tto_{\frac12} \forall y(P(b)\to P(y))$}
        \RightLabel{($\tto_{\frac12}{\exists}$)}
        \UnaryInfC{${} \tto_{\frac12} P(a)\to P(b)\mid {}\tto_{\frac12} \exists x\forall y(P(x)\to P(y))$}
        \RightLabel{($\tto_{\frac12}{\forall}$)}
        \UnaryInfC{${}\tto_{\frac12} \forall y(P(a)\to P(y))\mid {}\tto_{\frac12} \exists x\forall y(P(x)\to P(y))$}
        \RightLabel{($\tto_{\frac12}{\exists}$)}
        \UnaryInfC{${}\tto_{\frac12} \exists x\forall y(P(x)\to P(y))\mid {}\tto_{\frac12} \exists x\forall y(P(x)\to P(y))$}
        \RightLabel{(EC)}
        \UnaryInfC{${}\tto_{\frac12} \exists x\forall y(P(x)\to P(y))$}
        \DisplayProof
        \caption{A simplified derivation of ${}\tto_{\frac12} \exists x\forall y(P(x)\to P(y))$}
        \label{fig:simplified derivation}
    \end{figure}
}
\newcommand{\nn}{\mathbb{N}}

\newcommand{\rr}{\mathbb{R}}

\newcommand{\tto}{\mathrel{\Rightarrow}}

\newcommand{\otto}{\mathrel{\Leftrightarrow}}

\newcommand{\calL}{\mathcal{L}}

\newcommand{\calG}{\mathcal{G}}
\newcommand{\calH}{\mathcal{H}}
\newcommand{\calS}{\mathcal{S}}
\newcommand{\calM}{\mathcal{M}}

\newcommand{\frakI}{\mathfrak{I}}

\newcommand{\prop}{\operatorname{prop}}

\newcommand{\HL}{\text{\upshape\rmfamily HŁ}}
\newcommand{\GL}{\text{\upshape\rmfamily GŁ}}

\newcommand{\cp}{\operatorname{cp}}
\newcommand{\DT}{\operatorname{DT}}

\usepackage{stmaryrd}
\newcommand{\fs}[1]{\left\llbracket#1\right\rrbracket}

\title[The approximate strong completeness of $\GL\forall$]{The approximate strong completeness of the hypersequent calculus $\GL\forall$}
\author{Kai Duo~\orcidlink{0009-0003-6027-9728}}
\address{School of Mathematical Sciences, Beihang University, Beijing 102206, China}
\email{duokai@buaa.edu.cn}
\address{Beijing Institute of Mathematical Sciences and Applications, Beijing 101408, China}
\email{duokai@bimsa.cn}
\thanks{%
    The author would like to express sincere gratitude to Prof.~Kazuyuki~Tanaka, Prof.~Yichuan~Yang and Prof.~Wenjuan~Li for their invaluable guidance and support.
    The author would also like to thank Dr.~Jin~Wei for the helpful discussion.
}

\date{}

\keywords{first-order Łukasiewicz logic; hypersequent calculus; approximate strong completeness; labelled tableau method}

\subjclass[2020]{03B50, 03F03}

\begin{document}

\begin{abstract}
    The hypersequent calculus $\GL\forall$, an analytic Gentzen-style proof system of first-order Łukasiewicz logic, and its approximate completeness have been extensively studied.
    In this paper, we prove the approximate strong completeness of $\GL\forall$ by a labelled tableau method.
    Then we introduce a sequent-level cut rule (s-Cut) and show the approximate strong completeness of $\GL\forall+(\text{s-Cut})$.
    As applications, we establish the compactness of approximate $[0, 1]$-consequence, a variant of Gentzen's mid-sequent theorem of $\GL\forall$ and an approximate Herbrand's theorem of first-order Łukasiewicz logic.
\end{abstract}

\maketitle

\section{Introduction}

Infinite-valued Łukasiewicz logic, first introduced in \cite{lukasiewicz1930}, is one of the central and most extensively studied many-valued logics.
It is regarded, together with Gödel logic and Product logic, as one of the fundamental t-norm-based fuzzy logics; see \cite{hajek2001} for a general account.
The first-order Łukasiewicz logic under $[0, 1]$-semantics extends classical first-order logic by interpreting the quantifier symbols $\forall$ and $\exists$ by suprema and infima over $[0, 1]$ respectively.
Besides $[0, 1]$-semantics, Łukasiewicz logic also admits a so-called ``general semantics'' obtained by replacing $[0, 1]$ with an arbitrary MV-algebra.
Introduced by Chang~\cite{chang1958}, MV-algebras provide the algebraic semantics of Łukasiewicz logic.
The algebraic study on MV-algebras has been extensively developed; the monograph \cite{cignoli2013} is a standard reference.
In this paper, we work mainly with $[0, 1]$-semantics.

There are two Hilbert-style proof systems for Łukasiewicz logic: $\HL$ for propositional case and $\HL\forall$ for first-order case.
Chang~\cite{chang1958} first established the completeness of $\HL$ via the celebrated theorem that an MV-equation holds in every MV-algebra if and only if it holds in the standard MV-algebra $[0, 1]$, and hence the general semantics coincides with the $[0, 1]$-semantics.
However, this is not the case in the first-order setting.
$\HL\forall$ satisfies strong completeness under the general semantics \cite[Theorems 5.2.9]{hajek2001}, whereas under the $[0, 1]$-semantics it only satisfies approximate strong completeness \cite[Theorem 5.4.25]{hajek2001}, which asserts that if $T\models A$, then $\HL\forall$-derivations from $T$ can provide arbitrary good approximations of the truth value of $A$.
The approximation is necessary, since the set of $[0, 1]$-valid formulas is not recursively enumerable; more precisely, it is $\Pi_2$-complete, see \cite[Theorem 6.3.4]{hajek2001}.

Gentzen-style proof systems of Łukasiewicz logic are naturally formulated in terms of hypersequents, a generalization of two-sided sequents.
Hypersequents were introduced by Avron~\cite{avron1987} in the study of the relevance logic RM.
The first hypersequent calculus for a fuzzy logic was proposed in \cite{avron1991} for Gödel logic, and since then a variety of hypersequent calculi have been developed for many-valued and fuzzy logics; see \cite{metcalfe2008} for more details.
For propositional Łukasiewicz logic, the first hypersequent calculus $\GL$ was introduced by Metcalfe et al.~\cite{metcalfe2005}, who established the soundness and completeness of $\GL$.

Baaz and Metcalfe~\cite{baaz2010} studied hypersequent calculus $\GL\forall$, a natural first-order extension of $\GL$.
Using Skolemization together with an approximate form of Herbrand's theorem, they proved the approximate completeness under $[0, 1]$-semantics for all formulas.
Subsequently, Gerasimov identified some gaps in their proof and gave a rigorous proof for all prenex formulas, then extended the result to all formulas by a deprenexification method; see \cite{gerasimov2020, gerasimov2026}.
Recently, Wei~\cite{wei2025} established the approximate completeness for all hypersequents.
These completeness results only concern validity with respect to all interpretations, rather than validity with respect to all models of a given theory; the latter is usually referred to as ``strong completeness''.
The approximate strong completeness of $\HL\forall$ suggests a corresponding result for $\GL\forall$, which remains to be established and constitutes the main motivation of this paper.


The main purpose of this paper is to prove an approximate strong completeness theorem for $\GL\forall$.
More precisely, we show that if a hypersequent $\calG$ is a $[0, 1]$-consequence of a theory $T$, then for each positive natural number $n$ one can derive a $1/n$-approximation of $\calG$ witnessed by only finitely many formulas from $T$ in $\GL\forall$.
Our proof is based on a labelled tableau construction inspired by Buss~\cite{buss1998}.

The remainder of this paper is organized as follows.
In Section~\ref{sec:preliminary}, we introduce the affine $[0, 1]$-semantics of hypersequents and hypersequent calculi $\GL$ and $\GL\forall$.
In Section~\ref{sec:approximate strong completeness}, we prove our main theorem, the approximate strong completeness of $\GL\forall$ (Theorem~\ref{thm:approximate strong completeness of GŁ∀}).
Then we introduce a sequent-level cut rule (s-Cut) and show the approximate strong completeness of $\GL\forall+(\text{s-Cut})$ (Theorem~\ref{thm:approximate strong completeness of GŁ∀+(s-Cut)}).
In Section~\ref{sec:applications}, we prove the compactness of approximate $[0, 1]$-consequence (Theorem~\ref{thm:compactness of approximate logical entailment}), a mid-hypersequent theorem (Theorem~\ref{thm:midhypersequent}) and an approximate Herbrand's theorem (Theorem~\ref{thm:approximate Herbrand's theorem}).
Finally, we discuss some potential applications.
\section{Preliminary}
\label{sec:preliminary}

In this section, we introduce affine $[0, 1]$-semantics of hypersequents and hypersequent calculi $\GL$ and $\GL\forall$.
The reader is referred to \cite{buss1998} for the terminology of proof theory.

\subsection{Propositional Łukasiewicz logic and hypersequent calculus $\GL$}
\label{subsec:GŁ}
The \emph{formula}s of propositional Łukasiewicz logic are built from an infinite set $X$ of propositional variables with two connectives $\bot$ ($0$-ary) and $\to$ (binary).
A \emph{sequent} $\calS$ is an ordered pair $(\Gamma, \Delta)$ of finite multisets of formulas, denoted by $\Gamma\tto \Delta$.
A \emph{hypersequent} $\calG$ is a finite multiset $\{\calS_1, \cdots , \calS_n\}$ of sequents, denoted by $\calS_1\mid \cdots \mid \calS_n$.
Each $\calS_i$ is called a \emph{component} sequent of $\calG$.
We will also use the term ``infinite hypersequent'' to refer to an infinite multiset of sequents.
However, unless otherwise specified, hypersequents are always assumed to be finite.

We introduce the following notations.
For each $n\in\nn_{>0}$, formula $A$ and finite multiset $\Gamma$ of formulas, $n A$ denotes the multiset $\{A, \cdots, A\}$ ($n$ times) and  $n\Gamma$ denotes the multiset $\Gamma\sqcup\cdots\sqcup\Gamma$ ($n$ times), where $\sqcup$ denotes the disjoint union of multisets.
Sequent $\{A_1, \cdots, A_n\}\tto \{B_1, \cdots, B_m\}$ is usually abbreviated as $A_1, \cdots, A_n\tto B_1, \cdots, B_m$.
We write $\Gamma \tto {}$ as an abbreviation for $\Gamma \tto \varnothing$; the cases of ${} \tto \Delta$ and ${}\tto{}$ are similar.

Now we define affine $[0, 1]$-semantics of hypersequents.
A $[0, 1]$-\emph{evaluation} is a map $v: X\to [0, 1]$.
We extend $v$ to a function $\tilde{v}$ as follows:
\begin{enumerate-abc}
    \item $\tilde{v}(\bot)=1$.
    \item $\tilde{v}(A)=v(A)$ for every propositional variable $A\in X$.
    \item $\tilde{v}(A\to B)=\max\{\tilde{v}(B)-\tilde{v}(A), 0\}$.
    \item $\tilde{v}(\Gamma)=\sum_{A\in\Gamma} \tilde{v}(A)$ for finite $\Gamma\neq\varnothing$.
    $\tilde{v}(\varnothing)=0$.
    \item $\tilde{v}(\Gamma\tto \Delta)=\tilde{v}(\Delta)-\tilde{v}(\Gamma)$.
    \item For each hypersequent $\calG$ (possibly infinite), $\tilde{v}(\calG)=\inf_{\calS\in \calG} \tilde{v}(\calS)$.
\end{enumerate-abc}

We say a hypersequent $\calG$ is \emph{$[0, 1]$-valid}, denoted by $\models_{[0, 1]} \calG$, if $\tilde{v}(\calG)\leq 0$ for any $[0, 1]$-evaluation $v$.

We give some remarks here.

\begin{remark}~
    \begin{enumerate}
        \item For each $[0, 1]$-evaluation $v$, formula $A$ and hypersequent $\calG$, $\tilde{v}(A)\in [0, 1]$ while $\tilde{v}(\calG)\in\rr$.
        \item We use $0$ to represent truth and $1$ to represent falsity.
        This convention is also widely adopted in continuous model theory (see \cite{benyaacov2008, benyaacov2010}), and makes the subsequent representation of approximate completeness more convenient.
        \item In the definition of $\tilde{v}$, clause (d) explains the term ``affine'' in ``affine $[0, 1]$-semantics''; it also shows intuitively that both sides of a sequent are interpreted conjunctively, while clause (f) indicates that a hypersequent is interpreted as the disjunction of its component sequents.
        \item A formula $A$ is said to be $[0, 1]$-valid if $\tilde{v}(A)=0$ for any $[0, 1]$-evaluation $v$.
        It is easy to see that the $[0, 1]$-validity of $A$ is equivalent to the $[0, 1]$-validity of the hypersequent ${}\tto A$.
        Therefore, each formula $A$ can be identified with hypersequent ${}\tto A$, and affine $[0, 1]$-semantics of hypersequents can be viewed as an extension of $[0, 1]$-semantics of formulas in this sense.
        \item It is also possible to consider affine $\mathbf{A}$-semantics for each MV-algebra $\mathbf{A}$ by using Mundici's equivalence, which states that the category of MV-algebras and the category of abelian $\ell$-groups with strong unit are equivalent; see \cite{mundici1986}.
        This yields the general semantics; see \cite{baaz2010} for details.
    \end{enumerate}
\end{remark}

Next we define the hypersequent calculus $\GL$.

\begin{definition}[\cite{metcalfe2005}]
    $\GL$ has the following rules: 
    \begin{itemize}
        \item Axioms: 
        \begin{center}
            \scalebox{0.9}{
            \AxiomC{}
            \RightLabel{(ID)}
            \UnaryInfC{$A\tto A$}
            \DisplayProof
            \qquad
            \AxiomC{}
            \RightLabel{(EMP)}
            \UnaryInfC{$\tto$}
            \DisplayProof
            \qquad
            \AxiomC{}
            \RightLabel{($\bot$)}
            \UnaryInfC{$\bot\tto A$}
            \DisplayProof
            }
        \end{center}
        \item Structural rules: 
        \begin{center}
            \scalebox{0.9}{
            \AxiomC{$\calG$}
            \RightLabel{\makebox[2em][l]{(EW)}}
            \UnaryInfC{$\calG\mid \Gamma\tto\Delta$}
            \DisplayProof
            \qquad
            \AxiomC{$\calG\mid \Gamma\tto\Delta\mid \Gamma\tto\Delta$}
            \RightLabel{\makebox[2em][l]{(EC)}}
            \UnaryInfC{$\calG\mid \Gamma\tto\Delta$}
            \DisplayProof
            \qquad
            \AxiomC{$\calG\mid \Gamma\tto\Delta$}
            \RightLabel{\makebox[2em][l]{(IW)}}
            \UnaryInfC{$\calG\mid \Gamma, A\tto\Delta$}
            \DisplayProof
            }\vspace{0.3em}
            \scalebox{0.9}{
            \AxiomC{$\calG\mid \Gamma_1, \Gamma_2\tto\Delta_1, \Delta_2$}
            \RightLabel{\makebox[2em][l]{(S)}}
            \UnaryInfC{$\calG\mid \Gamma_1\tto\Delta_1\mid\Gamma_2\tto\Delta_2$}
            \DisplayProof
            \qquad
            \AxiomC{$\calG\mid \Gamma_1\tto\Delta_1$}
            \AxiomC{$\calG\mid \Gamma_2\tto\Delta_2$}
            \RightLabel{\makebox[2em][l]{(M)}}
            \BinaryInfC{$\calG\mid \Gamma_1, \Gamma_2\tto\Delta_1, \Delta_2$}
            \DisplayProof
            }
        \end{center}
        \item Logical rules: 
        \begin{center}
            \scalebox{0.9}{
            \AxiomC{$\calG\mid \Gamma\tto\Delta\mid \Gamma, B\tto A, \Delta$}
            \RightLabel{\makebox[2em][l]{(${\to}\tto$)}}
            \UnaryInfC{$\calG\mid \Gamma, A\to B\tto\Delta$}
            \DisplayProof
            \qquad
            \AxiomC{$\calG\mid \Gamma\tto\Delta$}
            \AxiomC{$\calG\mid \Gamma, A\tto B, \Delta$}
            \RightLabel{\makebox[2em][l]{($\tto{\to}$)}}
            \BinaryInfC{$\calG\mid \Gamma\tto A\to B, \Delta$}
            \DisplayProof
            }
        \end{center}
    \end{itemize}
    In (EW), we assume that $\calG\neq\varnothing$.
\end{definition}

It is easy to see that $\GL$ is an analytic proof system, i.e., every $\GL$-derivation satisfies ``subformula property''.
Hypersequent calculus $\GL$ enjoys soundness and completeness.

\begin{theorem}[\cite{metcalfe2005, wei2025}]\label{thm:completeness of GŁ}
    Let $\calG$ be a hypersequent.
    Then $\vdash_{\GL} \calG$ if and only if $\models_{[0, 1]} \calG$.
\end{theorem}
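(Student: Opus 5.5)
Soundness and completeness are handled separately. For soundness ($\vdash_{\GL}\calG\Rightarrow\models_{[0,1]}\calG$) we induct on the height of a $\GL$-derivation, showing that every rule preserves $[0,1]$-validity in the affine semantics; in fact we prove the stronger pointwise claim that for each evaluation $v$, if every premise $\calH$ satisfies $\tilde v(\calH)\le 0$, then so does the conclusion.

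For the axioms, $\tilde v(A\tto A)=0$, $\tilde v(\tto)=0$, and $\tilde v(\bot\tto A)=\tilde v(A)-1\le 0$.

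For the structural rules:
\begin{itemize}
\item (EW) and (EC) hold because the infimum over a larger, resp.\ duplicated, multiset is no larger, resp.\ unchanged.
\item (IW) holds because $\tilde v(A)\ge 0$.
\item (S) holds because if $\tilde v(\Delta_1)+\tilde v(\Delta_2)-\tilde v(\Gamma_1)-\tilde v(\Gamma_2)\le 0$, then one of the two summands $\tilde v(\Delta_i)-\tilde v(\Gamma_i)$ is $\le 0$.
\item (M) holds because if some component of $\calG$ is already $\le 0$ we are done, and otherwise both $\Gamma_i\tto\Delta_i$ are $\le 0$, so their sum is $\le 0$.
\end{itemize}

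For the logical rules, ($\tto{\to}$) is checked by splitting on whether $\tilde v(B)\ge\tilde v(A)$. In that case $\tilde v(A\to B)=\tilde v(B)-\tilde v(A)$ and we use the second premise; otherwise the value is $0$ and we use the first premise. The rule (${\to}\tto$) is handled the same way: the left formula $A\to B$ contributes $\max\{\tilde v(B)-\tilde v(A),0\}$, and the two cases correspond exactly to the two new components of the premise.

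For completeness, I would follow the Metcalfe et al.\ strategy in three steps.

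\textbf{Step 1 (invertibility).} First I show that the logical rules are invertible, and semantically so: the value of the conclusion equals the minimum of the values of the components introduced in the premise. By this I mean that for every $v$, $\tilde v(\Gamma,A\to B\tto\Delta)=\min\{\tilde v(\Gamma\tto\Delta),\tilde v(\Gamma,B\tto A,\Delta)\}$. Note that the subtraction there is reversed relative to the formula value, which is why the premise places $B$ on the left and $A$ on the right. The analogous identity holds on the right, with max/min arranged to match the two premises.

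\textbf{Step 2 (reduction to atomic hypersequents).} Repeatedly applying the rules backwards reduces any valid $\calG$ to finitely many hypersequents built only from variables and $\bot$. By Step 1 these are each valid, and it suffices to derive them.

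\textbf{Step 3 (atomic case).} An atomic hypersequent $\calS_1\mid\cdots\mid\calS_k$ is valid iff there is no $v$ making every $\sum\Delta_i-\sum\Gamma_i>0$, where each variable lies in $[0,1]$ and $\bot=1$. I treat each $\calS_i$ as a linear inequality and impose the bound constraints $0\le p\le 1$, which correspond to the sequents $\tto p$ and $p\tto\bot$, both derivable from (ID), ($\bot$) and (IW). By Farkas' lemma / Fourier--Motzkin elimination over $\mathbb Q$, infeasibility yields natural-number multipliers $\lambda_i$, not all zero, such that the $\lambda$-weighted sum of the components, together with bound instances, cancels syntactically to an instance derivable from (ID), ($\bot$), (EMP) and (IW). From that sum we recover $\calG$ by (S) to split the merged sequent, then (EC) to contract the $\lambda_i$ copies and (EW) as needed; reading this backwards, the merged sequent is produced by (M) from copies of the components.

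I expect Step 3 to be the main obstacle. The difficulty is ensuring that the multipliers are integral and that the boundary constraints, including strictness ($>0$ versus $\ge 0$), are handled correctly so that the cancelled sequent really is an axiom instance, using (IW) to absorb surplus on the left. If a direct Farkas argument is messy, I would instead induct on the number of variables, eliminating one variable $p$ at a time via (M) and (S) in the style of Fourier--Motzkin.
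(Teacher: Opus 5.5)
The paper does not prove this theorem; it is quoted from \cite{metcalfe2005, wei2025}, so your proposal has to stand on its own. Your soundness argument and Steps 1--2 follow the standard invertibility route and are correct. One small point on termination in Step 2: backward $({\to}\tto)$ duplicates $\Gamma,\Delta$, so total complexity can grow. Argue termination with the multiset ordering on the complexities of the components instead.

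The gap is in Step 3. The sequents ${}\tto p$ and $p\tto\bot$ have values $v(p)$ and $1-v(p)$. So they are not valid and, by your own soundness argument, not derivable; the valid bound sequents are $p\tto{}$ and $\bot\tto p$. Worse, suppose you merge bound instances into the weighted sum and then split with (S). They survive as extra components ${}\tto p$, $p\tto\bot$ of the final hypersequent, and $\GL$ has no rule that deletes a component (that would be a cut). Similarly, the merged sequent cannot be ``produced by (M) from copies of the components'', since those are not derivable individually. It has to be derived from axioms.

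The repair is to keep the bounds out of the hypersequent entirely. Put $f_i(v)=\tilde v(\Delta_i)-\tilde v(\Gamma_i)$ and apply Motzkin's transposition theorem (Farkas for mixed strict/non-strict systems) to $f_i(v)>0$, $v(p)\ge 0$, $1-v(p)\ge 0$. Since the data are integral, this gives $\lambda_i,\mu_p,\nu_p\in\nn$ and an integer $c\le 0$ with
\[
\textstyle\sum_i\lambda_i f_i+\sum_p\mu_p\, v(p)+\sum_p\nu_p\,(1-v(p))\equiv c .
\]
Moreover $\lambda\neq 0$: a certificate with $\lambda=0$ would need $c<0$, which is impossible because $[0,1]^X$ is nonempty.

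This identity says the single sequent $\bigsqcup_i\lambda_i\Gamma_i\tto\bigsqcup_i\lambda_i\Delta_i$ has value $c-\sum_p\mu_pv(p)-\sum_p\nu_p(1-v(p))\le 0$ on all of $[0,1]^X$. So it is itself a valid atomic sequent, and the bounds never become components. You then need the easy but missing lemma that every valid atomic sequent is derivable:
\begin{itemize}
\item Cancel common atoms using (ID) and (M).
\item Validity forces the left side to carry at least as many surplus $\bot$'s as there are surplus variable occurrences on the right.
\item Pair these off with ($\bot$)-axioms $\bot\tto p$ merged by (M), starting from (EMP) if there are none.
\item Add the remaining left material by (IW).
\end{itemize}
With that lemma, your (S), (EC), (EW) steps recover $\calG$, and the integrality and strictness worries you flag are settled.
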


\subsection{First-order Łukasiewicz logic and hypersequent calculus $\GL\forall$}
\label{sec:GŁ∀}
For a technical reason---namely, to guarantee the substitutability of terms for variables, a standard requirement in proof theory---we distinguish between free variables and bound variables, semiterms and terms, and so on.
We follow the same convention as in \cite{buss1998}.
Two disjoint infinite sets of variables are fixed, called \emph{free variable}s and \emph{bound variable}s.
Free variables are denoted by metavariables $a, b, c, \cdots$ and bound variables by metavariables $x, y, z, \cdots$.

From now on, we fix a first-order language $\calL$, which includes predicate symbols, function symbols, constant symbols, together with their arities.
\emph{$\calL$-term}s are built from free variables, constant symbols and function symbols in a usual way, while \emph{$\calL$-semiterm}s are built from all variables, constant symbols and function symbols.
An \emph{atomic} $\calL$-formula (respectively, \emph{atomic} $\calL$-semiformula) is of the form $P(t_1, \cdots, t_k)$, where $P\in\calL$ is a predicate symbol with arity $k$ and $t_1, \cdots, t_k$ are $\calL$-terms (respectively, $\calL$-semiterms).
\emph{$\calL$-formulas} are defined inductively as follows: 
\begin{enumerate-abc}
    \item $\bot$ is an $\calL$-formula.
    Each atomic $\calL$-formula is an $\calL$-formula.
    \item If both $A$ and $B$ are $\calL$-formulas, so is $A\to B$.
    \item If $A$ is an $\calL$-formula, $a$ is a free variable and $x$ is a bound variable not occurring in $A$, then $\forall x A[x/a]$ and $\exists x A[x/a]$ are $\calL$-formulas, where $A[x/a]$ is the expression obtained from $A$ by replacing every occurrence of $a$ in $A$ with $x$.
    \item $\calL$-formulas are obtained in finitely many steps by applying only (a)-(c).
\end{enumerate-abc}
An \emph{$\calL$-semiformula} is an expression like an $\calL$-formula, except that bound variables are also allowed to occur free in it.
Note that free variables can never be captured by quantifiers.
\emph{$\calL$-(semi)sequent}s and \emph{$\calL$-(semi)hypersequent}s are also defined same as in the propositional case, except replacing ``formulas'' by ``$\calL$-(semi)formulas''.

We define affine $[0, 1]$-semantics of $\calL$-semihypersequents.
A $[0, 1]$-\emph{structure} $\calM$ in language $\calL$ is defined as usual, except that the interpretation of an $n$-ary predicate symbol $P$ is a function $P^{\calM}: M^n\to [0, 1]$.
An $\calM$-\emph{assignment} $\beta$ is a function that assigns an element in $M$ to each free variable and bound variable.
We can extend $\beta$ to the set of $\calL$-semiterms inductively as follows: 
\begin{enumerate-abc}
    \item $\beta(e)=e^{\calM}$, where $e$ is a constant symbol.
    \item $\beta(f(t_1, \cdots, t_n))=f^{\calM}(\beta(t_1), \cdots, \beta(t_n))$, where $f$ is an $n$-ary function symbol and $t_1, \cdots, t_n$ are $\calL$-semiterms.
\end{enumerate-abc}
A $[0, 1]$-\emph{interpretation} $\frakI$ is an ordered pair $(\calM, \beta)$, where $\calM$ is a $[0, 1]$-structure and $\beta$ is an $\calM$-assignment.
Let $\frakI=(\calM, \beta)$ be a $[0, 1]$-interpretation, $x_0$ is a variable (possibly either free or bound) and $m\in M$.
Then we write $\frakI[x_0\mapsto m]$ for the $[0, 1]$-interpretation $(\calM, \beta[x_0\mapsto m])$, where $\beta[x_0\mapsto m]$ is defined by 
\begin{equation*}
    \beta[x_0\mapsto m](x)=
    \begin{cases}
        m, &\text{if $x$ is $x_0$}\\
        \beta(x), &\text{if $x$ is not $x_0$}
    \end{cases}.
\end{equation*}

Let $\frakI=(\calM, \beta)$ be a $[0, 1]$-interpretation.
For each $\calL$-semihypersequent $\calG$, the \emph{truth value} $\fs{\calG}_{\frakI}$ of $\calG$ with respect to $\frakI$ is defined inductively as follows: 
\begin{enumerate-abc}
    \item $\fs{\bot}_{\frakI}=1$.
    \item $\fs{P(t_1, \cdots, t_n)}_{\frakI}=P^{\calM}(\beta(t_1), \cdots, \beta(t_n))$.
    \item $\fs{A\to B}_{\frakI}=\max\{\fs{B}_{\frakI}-\fs{A}_{\frakI}, 0\}$.
    \item $\fs{\forall xA}_{\frakI}=\sup\{\fs{A}_{\frakI[x\mapsto m]}: m\in M\}$.
    \item $\fs{\exists xA}_{\frakI}=\inf\{\fs{A}_{\frakI[x\mapsto m]}: m\in M\}$.
    \item $\fs{\Gamma}_{\frakI}=\sum_{A\in\Gamma} \fs{A}_{\frakI}$ for finite $\Gamma\neq\varnothing$.
    $\fs{\varnothing}_{\frakI}=0$.
    \item $\fs{\Gamma\tto \Delta}_{\frakI}=\fs{\Delta}_{\frakI}-\fs{\Gamma}_{\frakI}$.
    \item For each $\calL$-semihypersequent $\calG$ (possibly infinite), $\fs{\calG}_{\frakI}=\inf_{\calS\in \calG} \fs{\calS}_{\frakI}$.
\end{enumerate-abc}

We still identify each $\calL$-semiformula $A$ with $\calL$-semihypersequent ${}\tto A$.
An \emph{$\calL$-theory} is a set of $\calL$-formulas.
Let $T$ be an $\calL$-theory, $\calG$ be an $\calL$-semihypersequent and $\frakI$ be a $[0, 1]$-interpretation.
We introduce the following definitions:

\begin{itemize}
    \item $\calG$ is \emph{true} in $\frakI$, denoted by $\frakI\models_{[0, 1]} \calG$, if $\fs{\calG}_{\frakI}\leq 0$.
    \item $\frakI$ is a $[0, 1]$-\emph{model} of $T$ if every formula in $T$ is true in $\frakI$.
    \item $\calG$ is a \emph{$[0, 1]$-consequence} of $T$, denoted by $T\models_{[0, 1]} \calG$, if $\calG$ is true in every $[0, 1]$-model of $T$.
    \item $\calG$ is \emph{$[0, 1]$-valid} if $\models_{[0, 1]} \calG$, that is, $\calG$ is true in every $[0, 1]$-interpretation.
\end{itemize}

As in classical fist-order logic, substitution is important for connecting syntax and semantics.
We shall say that ``$A(x)$ is an $\calL$-(semi)formula'', meaning that $A$ is an $\calL$-(semi)formula with a distinguished variable $x$ (possibly either free or bound), which may or may not occur in $A$.
For each $\calL$-semiformula $A(x)$ and $\calL$-semiterm $t$, $A(t)$ is the $\calL$-semiformula obtained by replacing every free occurrence of $x$ in $A$ with $t$.
We say $t$ is \emph{substitutible} for $x$ in $A(x)$ if no bound variable $y$ in $t$ is captured by a quantifier $\forall y$ or $\exists y$ in $A(t)$.
Note that $t$ is always substitutible for $x$ in $A(x)$ for each $\calL$-term $t$ and $\calL$-formula of the form $\forall x A(x)$ (or $\exists x A(x)$).
The next lemma is an analogue of the substitution lemma for classical first-order logic.
A corresponding version for continuous logic can be found in \cite{benyaacov2010}.

\begin{proposition}[Substitution lemma]\label{prop:substitution lemma}
    Let $\frakI=(\calM, \beta)$ be a $[0, 1]$-interpretation.
    Then the following statements hold:
    \begin{enumerate}
        \item $\beta[x\mapsto \beta(t)](s) = \beta(s[t/x])$ for each $\calL$-semiterm $s$ and $t$.
        \item Let $A(x)$ be an $\calL$-semiformula and $t$ be an $\calL$-semiterm.
        If $t$ is substitutible for $x$ in $A(x)$, then $\fs{A(x)}_{\frakI[x\mapsto \beta(t)]}=\fs{A(t)}_{\frakI}$.
    \end{enumerate}
\end{proposition}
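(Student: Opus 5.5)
Both parts go by structural induction: part (1) on the semiterm $s$, and part (2) on the semiformula $A(x)$. Part (1) is then used at the atomic step of part (2). Throughout I write $\beta' = \beta[x\mapsto\beta(t)]$ and $\frakI' = (\calM, \beta') = \frakI[x\mapsto \beta(t)]$.

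For part (1), the base cases are as follows.
\begin{itemize}
    \item If $s$ is the variable $x$, then $s[t/x]$ is $t$, and $\beta'(x)=\beta(t)$.
    \item If $s$ is a variable other than $x$, then $s[t/x]=s$ and $\beta'(s)=\beta(s)$.
    \item If $s$ is a constant symbol $e$, both sides equal $e^{\calM}$.
\end{itemize}
For the inductive step, let $s=f(s_1,\dots,s_n)$. Then $s[t/x]=f(s_1[t/x],\dots,s_n[t/x])$. The induction hypothesis together with clause (b) of the extension of assignments gives
\[
\beta'(s)=f^{\calM}(\beta(s_1[t/x]),\dots,\beta(s_n[t/x]))=\beta(s[t/x]).
\]

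For part (2), I strengthen the claim so that it holds uniformly in $\beta$. That is, I prove that for every assignment $\beta$ and every $t$ substitutible for $x$ in $A(x)$, we have $\fs{A(x)}_{\frakI'}=\fs{A(t)}_{\frakI}$. The uniformity is needed because the quantifier step changes $\beta$.
\begin{itemize}
    \item The case $\bot$ is trivial.
    \item The atomic case $P(s_1,\dots,s_k)$ follows from part (1) applied to each $s_i$.
    \item The case $B\to C$ follows from the induction hypotheses for $B$ and $C$, since substitutibility in $B\to C$ implies it in both $B$ and $C$, and clause (c) is a function of the two values.
\end{itemize}

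The quantifier case $A=\forall y B$ (and dually $\exists y B$, with $\inf$ in place of $\sup$) is the main obstacle, and I split it into two subcases.

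If $y$ is $x$, then $x$ has no free occurrence in $A$, so $A(t)=A$. The value of $\forall x B$ does not depend on $\beta(x)$, because the supremum reassigns $x$. Hence both sides agree. This uses the routine coincidence fact that the truth value depends only on the values of the variables occurring free; I would prove it by a quick separate induction.

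If $y$ is not $x$ and $x$ does not occur free in $B$, we conclude by the same coincidence fact. Otherwise substitutibility forces $y$ not to occur in $t$, and $t$ is substitutible for $x$ in $B$. For each $m\in M$ we have
\[
\beta'[y\mapsto m]=\beta[y\mapsto m][x\mapsto \beta(t)],
\]
since $x\neq y$. Moreover $\beta[y\mapsto m](t)=\beta(t)$, by part (1) applied to $t$ (or directly, since $y$ does not occur in $t$). Applying the strengthened induction hypothesis to $B$ with assignment $\beta[y\mapsto m]$ gives
\[
\fs{B(x)}_{\frakI'[y\mapsto m]}=\fs{B(t)}_{\frakI[y\mapsto m]}.
\]
Taking the supremum over $m$ yields the claim, because $(\forall y B)(t)=\forall y\,B(t)$.

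The delicate bookkeeping in this step is the commutation of the two updates, the invariance of $\beta(t)$ under changing $y$, and the case where $x$ is not free in $B$.
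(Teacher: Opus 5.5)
The paper states this proposition without proof, as the routine analogue of the classical substitution lemma. Your structural induction is exactly that standard argument and is correct: part (1) on $s$, part (2) on $A$ strengthened to all assignments, with the three-way split in the quantifier step and the coincidence lemma when $x$ is not free in $B$.

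Two small fixes. First, $\beta[y\mapsto m](t)=\beta(t)$ does not follow from part (1), which only covers updates to values of the form $\beta(r)$, and an arbitrary $m\in M$ need not have that form. Rely instead on your parenthetical direct argument that a term's value depends only on the variables occurring in it. Second, the subcase $y=x$ needs no coincidence lemma, since $\beta[x\mapsto\beta(t)][x\mapsto m]=\beta[x\mapsto m]$.
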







We define the hypersequent calculus $\GL\forall$.

\begin{definition}[\cite{baaz2010}]
    Let $\GL\forall$ be $\GL$ with the added logical rules for quantifiers: 
    \begin{center}
        \scalebox{0.9}{
            \AxiomC{$\calG\mid \Gamma, A(t)\tto\Delta$}
            \RightLabel{($\forall\tto$)}
            \UnaryInfC{$\calG\mid \Gamma, \forall xA(x)\tto\Delta$}
            \DisplayProof
            \qquad
            \AxiomC{$\calG\mid \Gamma\tto A(a), \Delta$}
            \RightLabel{($\tto\forall$)}
            \UnaryInfC{$\calG\mid \Gamma\tto \forall xA(x), \Delta$}
            \DisplayProof
        }\vspace{0.3em}
        \scalebox{0.9}{
            \AxiomC{$\calG\mid \Gamma, A(a)\tto\Delta$}
            \RightLabel{($\exists\tto$)}
            \UnaryInfC{$\calG\mid \Gamma, \exists xA(x)\tto\Delta$}
            \DisplayProof
            \qquad
            \AxiomC{$\calG\mid \Gamma\tto A(t), \Delta$}
            \RightLabel{($\tto\exists$)}
            \UnaryInfC{$\calG\mid \Gamma\tto \exists xA(x), \Delta$}
            \DisplayProof
        }
    \end{center}
    where $t$ is any $\calL$-term and $a$ is a free variable which does not occur in the conclusion of ($\tto\forall$) or ($\exists\tto$); such a free variable $a$ is called the \emph{eigenvariable} of the rule.
\end{definition}

Although derivations were already mentioned in the previous subsection, we nevertheless give precise definitions of derivations and derivations from a theory here to avoid any possible ambiguity.
Note that we restrict ourselves to derivations formed by formulas, rather than semiformulas.

\begin{definition}\label{def:derivation}
    Let $\text{\upshape\rmfamily S}$ be a hypersequent calculus containing $\GL\forall$.
    \begin{itemize}
        \item An \emph{$\text{\upshape\rmfamily S}$-derivation} is a finite tree $\pi$ labelled by $\calL$-hypersequents whose leaves are labelled by axioms of $\text{\upshape\rmfamily S}$, and labels of other nodes are obtained by applying inference rules in $\text{\upshape\rmfamily S}$.
        If the root of $\pi$ is labelled by $\calG$, then we say $\calG$ is $\text{\upshape\rmfamily S}$-\emph{derivable}, denoted by $\vdash_{\text{\upshape\rmfamily S}} \calG$, and $\pi$ is an $\text{\upshape\rmfamily S}$-derivation of $\calG$.
        \item Let $T$ be an $\calL$-theory.
        An \emph{$\text{\upshape\rmfamily S}_{T}$-derivation} is an $\text{\upshape\rmfamily S}$-derivation $\pi$ satisfies the following conditions:
        \begin{enumerate-abc}
            \item For each $C\in T$, sequent ${}\tto C$ can be used as an axiom in $\pi$;
            \item eigenvariables used in $\pi$ do not occur in $T$.
        \end{enumerate-abc}
        We say an $\calL$-hypersequent $\calG$ is \emph{derivable from $T$ in $\text{\upshape\rmfamily S}$}, denoted by $T\vdash_{\text{\upshape\rmfamily S}} \calG$, if there is an $\text{\upshape\rmfamily S}_{T}$-derivation of $\calG$.
    \end{itemize}
\end{definition}

Hypersequent calculus $\GL\forall$ is also analytic, and sound with respect to affine $[0, 1]$-semantics.
In fact, we can show the following strong soundness result.

\begin{theorem}\label{thm:strong soundness}
    Let $T$ be an $\calL$-theory and $\calG$ be an $\calL$-hypersequent.
    If $T\vdash_{\GL\forall} \calG$, then $T\models_{[0, 1]} \calG$.
\end{theorem}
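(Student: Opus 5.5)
We argue by induction on the height of a $\GL\forall_T$-derivation $\pi$ of $\calG$. The claim we prove for each hypersequent $\calH$ in $\pi$ is this: for every $[0, 1]$-model $\frakI=(\calM, \beta)$ of $T$, we have $\fs{\calH}_{\frakI}\leq 0$. Quantifying over all assignments $\beta$ on the fixed structure is essential, because the eigenvariable rules require varying the value of a free variable. Some care is needed about what ``model of $T$'' means when $T$ has free variables. Condition (b) of Definition~\ref{def:derivation} guarantees that eigenvariables do not occur in $T$. Hence modifying $\beta$ at an eigenvariable $a$ preserves being a model of $T$, since the truth values of formulas of $T$ do not depend on $\beta(a)$.

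For the base cases, the axioms (ID), (EMP) and ($\bot$) have value $\leq 0$ under every interpretation. For ($\bot$) this holds because $\fs{A}_{\frakI}-1\leq 0$. The extra axioms ${}\tto C$ with $C\in T$ have value $\fs{C}_{\frakI}\leq 0$ because $\frakI$ is a model of $T$.

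For the propositional rules we use the affine reading: a hypersequent is true when some component has $\fs{\Delta}_{\frakI}\leq\fs{\Gamma}_{\frakI}$. The structural rules are routine.
\begin{itemize}
\item (EW) and (EC) hold because the value is an infimum over components.
\item (IW) holds because adding $A$ to the left only decreases the value, as $\fs{A}_{\frakI}\geq 0$.
\item (S) holds because if $\fs{\Delta_1}_{\frakI}+\fs{\Delta_2}_{\frakI}\leq \fs{\Gamma_1}_{\frakI}+\fs{\Gamma_2}_{\frakI}$, then one of the two summand inequalities holds.
\item (M) holds by adding the two inequalities, unless $\calG$ is already true in $\frakI$.
\end{itemize}
For (${\to}\tto$), write $\fs{A\to B}_{\frakI}=\max\{\fs{B}_{\frakI}-\fs{A}_{\frakI},0\}$. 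If $\fs{B}_{\frakI}\leq\fs{A}_{\frakI}$, use the premise component $\Gamma\tto\Delta$. Otherwise use $\Gamma, B\tto A,\Delta$, whose value equals that of $\Gamma, A\to B\tto\Delta$. The rule ($\tto{\to}$) is handled symmetrically, with the max in the succedent.

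For the quantifier rules we use the Substitution lemma (Proposition~\ref{prop:substitution lemma}).
\begin{itemize}
\item For ($\forall\tto$): $\fs{A(t)}_{\frakI}=\fs{A(x)}_{\frakI[x\mapsto\beta(t)]}\leq\fs{\forall xA(x)}_{\frakI}$. So replacing $A(t)$ by $\forall xA(x)$ on the left does not increase the value.
\item ($\tto\exists$) is dual: $\fs{\exists xA(x)}_{\frakI}\leq \fs{A(t)}_{\frakI}$ on the right.
\end{itemize}

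The main obstacle is the eigenvariable rules ($\tto\forall$) and ($\exists\tto$). Take ($\tto\forall$), fix a model $\frakI$ of $T$, and suppose the conclusion $\calG\mid\Gamma\tto\forall xA(x),\Delta$ is false in $\frakI$. Then every component of $\calG$ has value $>0$ and $\fs{\forall xA(x)}_{\frakI}+\fs{\Delta}_{\frakI}>\fs{\Gamma}_{\frakI}$. By the definition of the supremum, we can choose $m\in M$ with $\fs{A(x)}_{\frakI[x\mapsto m]}+\fs{\Delta}_{\frakI}>\fs{\Gamma}_{\frakI}$.

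Now let $\frakI'=\frakI[a\mapsto m]$. This is still a model of $T$ because $a$ does not occur in $T$. Since $a$ does not occur in the conclusion, $\calG,\Gamma,\Delta$ and $A(x)$ have the same values in $\frakI'$ as in $\frakI$. By the Substitution lemma, $\fs{A(a)}_{\frakI'}=\fs{A(x)}_{\frakI[x\mapsto m]}$. Hence the premise is false in $\frakI'$, contradicting the induction hypothesis.

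The case ($\exists\tto$) is symmetric, choosing $m$ that nearly attains the infimum. One routine point to check is that the values of formulas do not depend on the assignment to variables not occurring in them. With these cases the induction is complete, and applying it to the root $\calG$ gives $T\models_{[0,1]}\calG$.
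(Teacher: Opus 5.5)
Your proposal is correct and follows essentially the paper's proof: induction on the height of the derivation, where the only delicate cases are the eigenvariable rules, handled by passing to $\frakI[a\mapsto m]$, which remains a model of $T$ because $a$ does not occur in $T$. The difference is cosmetic: the paper computes the conclusion's value under ($\tto\forall$) directly as $\sup_{m\in M}$ of the premise's values at $\frakI[a\mapsto m]$, while you argue by contradiction with a near-supremum witness $m$, and you also spell out the remaining rules that the paper leaves as ``similar''.
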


\begin{proof}
    Strong soundness can be proved directly by induction on the height of the derivations, based on the following statements: 
    \begin{enumerate-abc}
        \item Axioms in $\GL\forall$ and formulas in $T$ are true in every $[0, 1]$-model of $T$. 
        \item For each inference rule in $\GL\forall$, if its premises are true in every $[0, 1]$-model of $T$, so is its conclusion.
        If the inference rule is ($\tto\forall$) or ($\exists\tto$), we additionally assume that the eigenvariable does not occur in $T$ (see Definition~\ref{def:derivation} (b) for the motivation).
    \end{enumerate-abc}

    Now we show above statements.
    Statement (a) is trivial.
    For statement (b), we only treat the case where the inference rule is ($\tto\forall$); the remaining cases are similar.
    Assume that $\calG\mid \Gamma\tto A(a), \Delta$ is true in every $[0, 1]$-model of $T$, and free variable $a$ does not occur in $T$, $\calG$, $\Gamma$ or $\Delta$.
    Let $\frakI=(\calM, \beta)$ be a $[0, 1]$-model of $T$.
    For simplicity, we write $\fs{~}_{\frakI}$ as $\fs{~}$ and write $\fs{~}_{\frakI[a\mapsto m]}$ as $\fs{~}_{m}$ for each $m\in M$.
    By our assumption on $a$, for each $m\in M$, we have $\fs{C}_{m}=\fs{C}=0$ for each $C\in T$, $\fs{\calG}_{m}=\fs{\calG}$, $\fs{\Gamma}_{m}=\fs{\Gamma}$ and $\fs{\Delta}_{m}=\fs{\Delta}$.
    Hence $\frakI[a\mapsto m]$ is a $[0, 1]$-model of $T$ and $\calG\mid \Gamma\tto A(a), \Delta$ is true in $\frakI[a\mapsto m]$ for each $m\in M$.
    By substitution lemma (Proposition~\ref{prop:substitution lemma}),  we have
    \begin{align*}
        \fs{\calG\mid \Gamma\tto \forall xA(x), \Delta}
        &=\min\{\fs{\calG}, -\fs{\Gamma}+ \fs{\forall xA(x)}+\fs{\Delta}\}\\
        &=\min\left\{\fs{\calG}, -\fs{\Gamma}+ \sup_{m\in M}\fs{A(x)}_{\frakI[x\mapsto m]}+\fs{\Delta}\right\}\\
        &=\min\left\{\fs{\calG}, -\fs{\Gamma}+ \sup_{m\in M}\fs{A(x)}_{\frakI[a\mapsto m][x\mapsto m]}+\fs{\Delta}\right\}\\
        &=\min\left\{\fs{\calG}, -\fs{\Gamma}+ \sup_{m\in M}\fs{A(a)}_m+\fs{\Delta}\right\}\\
        &=\sup_{m\in M}\min\{\fs{\calG}, -\fs{\Gamma}+ \fs{A(a)}_m+\fs{\Delta}\}\\
        &=\sup_{m\in M}\min\{\fs{\calG}_m, -\fs{\Gamma}_m+ \fs{A(a)}_m+\fs{\Delta}_m\}\\
        &=\sup_{m\in M}\fs{\calG\mid \Gamma\tto A(a), \Delta}_m\leq 0.
    \end{align*}
    Hence $\calG\mid \Gamma\tto \forall xA(x), \Delta$ is true in $\frakI$.
    This completes the proof.
\end{proof}

However, as mentioned in Introduction, the set of $[0, 1]$-valid $\calL$-formulas is $\Pi_2$-complete; see \cite[Theorem 6.3.4]{hajek2001}.
Consequently, the set of $[0, 1]$-valid $\calL$-hypersequents is at least $\Pi_2$-hard, and in fact $\Pi_2$-complete as well, since Theorem~\ref{thm:approximate completeness of GŁ∀} shows that it belongs to $\Pi_2$.
Hence we cannot expect to obtain a completeness result in the context of the affine $[0, 1]$-semantics.
Approximate completeness needs to be considered, and for this purpose we introduce some convenient notations, which also illustrate the advantage of hypersequents.
For each  $n\in\nn_{>0}$, $\calL$-semisequent $\Gamma\tto \Delta$ and $\calL$-semihypersequent $\calG=\Gamma_1\tto\Delta_1\mid\cdots\mid \Gamma_k\tto \Delta_k$, we use $\Gamma\tto_{\frac{1}{n}} \Delta$ to denote the $\calL$-semisequent $\bot, n\Gamma\tto n\Delta$ and use $\calG_{\frac{1}{n}}$ to denote the $\calL$-semihypersequent $\Gamma_1\tto_{\frac1n}\Delta_1\mid\cdots\mid \Gamma_k\tto_{\frac1n} \Delta_k$.
By direct calculation, $\fs{\calG_{\frac1n}}_{\frakI}=n\fs{\calG}_{\frakI}-1$ for each $\calL$-semihypersequent $\calG$ and $[0, 1]$-interpretation $\frakI$.
Hence $\fs{\calG_{\frac1n}}_{\frakI}\leq 0$ if and only if $\fs{\calG}_{\frakI}\leq \frac1n$.
In this sense, $\calG_{\frac1n}$ is an approximate of $\calG$.
We say $\calG$ is \emph{$\frac1n$-valid} if $\calG_{\frac1n}$ is $[0, 1]$-valid.
$\calG$ is $[0, 1]$-valid if and only if $\calG$ is $\frac1n$-valid for each $n\in\nn_{>0}$.

Baaz and Metcalfe~\cite{baaz2010} proved the approximate completeness of $\GL\forall$ for all formulas.
Subsequently, Gerasimov identified some gaps in their proof and gave a rigorous proof for all prenex formulas, then extended the result to all formulas (Theorem~\ref{thm:approximate completeness of GŁ∀, formula}) by a deprenexification method; see \cite[Theorem 7 and the subsequent discussion]{gerasimov2020} and \cite[Theorem 5.3]{gerasimov2026}.
Recently, Wei~\cite{wei2025} established the approximate completeness for all hypersequents (Theorem~\ref{thm:approximate completeness of GŁ∀}).

\begin{theorem}[{\cite[Theorem 5.3]{gerasimov2026}}]\label{thm:approximate completeness of GŁ∀, formula}
    Let $A$ be an $\calL$-formula.
    Then $\models_{[0, 1]} A$ if and only if $\vdash_{\GL\forall} {}\tto_{\frac1n} A$ for each $n\in\nn_{>0}$.
\end{theorem}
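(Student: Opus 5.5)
The plan is to treat the two directions separately: the ``if'' direction follows from soundness, and the ``only if'' direction goes by contraposition via a systematic deduction tree, a term model, and a truth lemma.

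\emph{The ``if'' direction.} Assume $\vdash_{\GL\forall} {}\tto_{\frac1n} A$ for every $n\in\nn_{>0}$. By Theorem~\ref{thm:strong soundness} with $T=\varnothing$, each ${}\tto_{\frac1n} A$ is $[0, 1]$-valid. The identity $\fs{\calG_{\frac1n}}_{\frakI}=n\fs{\calG}_{\frakI}-1$ then gives $\fs{A}_{\frakI}\leq \frac1n$ for every $n$ and every $\frakI$. Hence $\fs{A}_{\frakI}=0$, i.e.\ $\models_{[0, 1]} A$.

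\emph{The ``only if'' direction.} Fix $n$ and suppose ${}\tto_{\frac1n} A$ is not derivable; the goal is a $[0, 1]$-interpretation with $\fs{A}_{\frakI}>0$.

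1. \emph{Deduction tree.} Enumerate all $\calL$-terms $t_1, t_2, \ldots$. Build a finitely branching deduction tree rooted at ${}\tto_{\frac1n} A$, working upward in the manner of a proof search. At each stage, decompose one formula occurrence while keeping the old component, which is justified by (EC). Connectives use (${\to}\tto$) and ($\tto{\to}$). For $\forall$ on the left and $\exists$ on the right, instantiate with the next term $t_i$, so along any infinite branch every term is eventually used for every such formula. For $\forall$ on the right and $\exists$ on the left, use a fresh eigenvariable.

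2. \emph{Closing leaves.} A node is closed when the propositional part of its hypersequent is $\GL$-derivable. Here atoms and quantified formulas are read as propositional variables. Closure uses Theorem~\ref{thm:completeness of GŁ} together with (EW). If every branch closed, König's lemma would yield a finite $\GL\forall$-derivation, contradicting the assumption. So there is an infinite branch on which no finite stage is propositionally valid.

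3. \emph{Countermodel.} Let $\calH$ be the union of the hypersequents along this infinite branch; it is an infinite hypersequent. Every finite subset of $\calH$ has an evaluation $v$ with value $>0$. Compactness of $[0, 1]^{X}$ (Tychonoff), together with continuity of each component's value in finitely many coordinates, gives a single $v$ with $\tilde v(\calS)\geq 0$ for all $\calS\in\calH$. The inequality may be only weak here. To recover strictness I would run the whole argument with $\frac1{2n}$ in place of $\frac1n$ from the start. Then define the term model: the domain is the set of terms, $\beta$ is the identity, and atoms are interpreted by $v$.

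4. \emph{Truth lemma.} Show that every component of $\calH$ has nonnegative value in this interpretation. Working with the $\frac1{2n}$ version then yields $\fs{A}_{\frakI}\geq \frac1{2n}>0$, contradicting $\models_{[0, 1]} A$.

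\emph{Main obstacle.} Step 4 is the hard part. Because the semantics is affine, a single formula's value cannot be determined in isolation: decomposing $A\to B$ on the right splits into two alternatives, and a quantifier's value is an infimum or supremum that is only approximated by instances. I would first try to prove, by induction on a global measure (the number of connectives and quantifiers in each component), that for every component in $\calH$ and every non-atomic occurrence in it, some later component in $\calH$ replaces that occurrence by its decomposition with value no larger. For quantifiers this uses that all term instances occur, so the infimum over the term domain is attained along $\calH$; for $\forall$ on the right it uses the eigenvariable witness. Passing to a limit over infinitely many replacement steps is exactly where the slack between $\frac1{2n}$ and $\frac1n$ should be spent.
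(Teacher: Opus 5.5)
You have the right architecture, but there is a genuine gap at Step~4, which is the step that matters. Your soundness direction and Steps~1--3 are fine. They follow the route by which the paper obtains this statement, namely the case $T=\varnothing$ of its main theorem: deduction tree, weak K\"onig, the Tychonoff argument of Lemma~\ref{lem:propositional hypersequent with lower bound}, and the term model. Your one deviation is to run the search on ${}\tto_{\frac1n}A$ itself with the primitive $\GL\forall$ rules, giving each of the $n$ copies its own eigenvariable. That is legitimate, and it spares you the admissible rules of Lemma~\ref{lem:1/n logical rules}. The paper needs those only because it searches on the unapproximated hypersequent with a $\frac1n$-activeness test.

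You leave Step~4 open, and your sketch for it points the wrong way.
\begin{itemize}
\item A decomposition ``with value no larger'' does not exist for $\exists$ on the right or $\forall$ on the left. Every instance $\Gamma\tto B(t),\Delta$ has value at least $\fs{\Gamma\tto\exists xB(x),\Delta}$, and in the term model the infimum over terms need not be attained.
\item There is no limit over infinitely many replacement steps either, since each decomposition strictly lowers complexity.
\end{itemize}
The missing idea is to argue upward, by induction on $\cp(\calS)$ for $\calS\in\calH$. This uses exact identities that hold because a sequent's value is additive in its formulas:
\begin{itemize}
\item $\fs{\Gamma\tto B\to C,\Delta}=\max\{\fs{\Gamma\tto\Delta},\fs{\Gamma,B\tto C,\Delta}\}$;
\item $\fs{\Gamma,B\to C\tto\Delta}=\min\{\fs{\Gamma\tto\Delta},\fs{\Gamma,C\tto B,\Delta}\}$;
\item in the term model (Remark~\ref{rmk:term interpretation}), $\fs{\Gamma\tto\exists xB(x),\Delta}=\inf_t\fs{\Gamma\tto B(t),\Delta}$ and $\fs{\Gamma,\exists xB(x)\tto\Delta}=\sup_t\fs{\Gamma,B(t)\tto\Delta}$, and dually for $\forall$.
\end{itemize}
Fairness of the search places the needed decompositions in $\calH$:
\begin{itemize}
\item one of the two premises for $\to$ on the right;
\item both premises for $\to$ on the left;
\item \emph{all} term instances for $\exists$ on the right;
\item the eigenvariable instance for $\exists$ on the left.
\end{itemize}
These have smaller complexity, so by the induction hypothesis each is $\geq 0$, and the identities give $\fs{\calS}\geq 0$. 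No witness attaining an infimum is needed. This is exactly the Claim in the proof of Lemma~\ref{lem:infinite tree implies invalid}.

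No slack is consumed anywhere, so the switch to $\frac1{2n}$ is unnecessary. The weak inequality from compactness, which you only need for the quantifier-free components of $\calH$, already gives $\fs{\bot\tto nA}\geq 0$ at the root, i.e.\ $\fs{A}\geq\frac1n>0$. The $\bot$ in ${}\tto_{\frac1n}A$ is precisely what turns the weak bound into a strict countermodel.
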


\begin{theorem}[{\cite[Theorem 4.4.15]{wei2025}}]\label{thm:approximate completeness of GŁ∀}
    Let $\calG$ be a valid $\calL$-hypersequent.
    Then $\vdash_{\GL\forall} \calG_{\frac1n} $ for each $n\in\nn_{>0}$.
\end{theorem}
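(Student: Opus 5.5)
The plan is to prove the statement by a systematic proof-search (deduction tree) construction. If the search fails, a countermodel is extracted from an infinite branch, using the completeness of $\GL$ (Theorem~\ref{thm:completeness of GŁ}) together with a compactness argument on $[0,1]$-evaluations. Fix $n$ and a valid $\calL$-hypersequent $\calG$. Enumerate all $\calL$-terms $t_0, t_1, \dots$ and fix a fair schedule of stages. Each stage treats one non-atomic formula occurrence, and in the case of $\tto\exists$ (resp.\ $\forall\tto$) it treats the pair consisting of that formula and the next term $t_i$. The tree $\DT_n(\calG)$ is built as in Figure~\ref{fig:construction of deduction tree}, reading the figure's steps bottom-up as backward applications of the logical rules. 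The principal component is always kept, so that (EC) and (EW) can later discard the copies, and fresh eigenvariables are used for $\tto\forall$ and $\exists\tto$. For a node $\sigma$, write $\prop(\calG^\sigma)$ for the propositional hypersequent obtained by regarding all formulas as propositional atoms, except that $\to$ and $\bot$ keep their meaning.

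\emph{Step 1 (finite success).} Suppose there is a level $k$ such that for every node $\sigma$ at level $k$ the hypersequent $\prop(\calG^\sigma)_{\frac1n}$ is $[0,1]$-valid. Then Theorem~\ref{thm:completeness of GŁ} gives $\GL$-derivations of these hypersequents, which are also $\GL\forall$-derivations. Reading the tree upward with the rules ($\tto\exists$), ($\tto\forall$), etc.\ in their $\tfrac1n$-form, interleaved with (EC) and (EW) as in Figure~\ref{fig:strict derivation}, yields $\vdash_{\GL\forall}\calG_{\frac1n}$. One technical point must be checked here: the rules remain sound and derivable when every sequent carries the prefix $\bot, n\Gamma\tto n\Delta$. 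This holds because the $n$ copies of the principal formula can be processed one at a time, using (M) and (EC) for the binary rules.

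\emph{Step 2 (failure gives a branch and an evaluation).} Suppose Step 1 fails at every level. Since the tree is finitely branching, König's lemma yields an infinite branch $\sigma_0\sigma_1\cdots$ on which each $\prop(\calG^{\sigma_k})_{\frac1n}$ is invalid; this uses that failure is inherited downward, because a level with all nodes valid would also settle its ancestors. Let $\calH$ be the infinite hypersequent formed by the union of all components along the branch. Each finite stage admits an evaluation $v_k$ with $\tilde v_k(\prop(\calG^{\sigma_k}))>\tfrac1n$. The components only accumulate along the branch, and the relevant value functions are continuous on $[0,1]^X$. Hence, by Tychonoff compactness of $[0,1]^X$, we obtain a single $v$ with $\tilde v(\calS)\ge \tfrac1n$ for every component $\calS$ of $\calH$.

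\emph{Step 3 (term model and truth lemma).} Define the Herbrand-style structure $\calM$ on the set of $\calL$-terms by $P^{\calM}(\vec t)=v(P(\vec t))$, with the identity assignment. The key claim is $\fs{A}_{\calM}=\tilde v(A)$ for every formula $A$ occurring in $\calH$, proved by induction on $A$. The $\to$ case is immediate. The quantifier cases use fairness. For a right-hand occurrence of $\exists xB(x)$, every instance $B(t_i)$ eventually appears in a component of the form ``old component with $\exists xB$ replaced by $B(t_i)$''. If $\tilde v(B(t_i))<\tilde v(\exists xB)$, that component would have value smaller than the old one, and in the limit this conflicts with the bound of Step 2. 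For a right-hand $\forall$, the eigenvariable instance witnesses the supremum in the same way.

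I expect Step 3 to be the main obstacle. The evaluation $v$ treats quantified formulas as independent atoms, so the inequalities coming from the branch are only one-sided and only hold up to the slack introduced by $\tfrac1n$. The remedy I would try first is to prove the truth lemma as two inequalities: $\fs{A}\ge\tilde v(A)$ for right-hand occurrences and $\fs{A}\le\tilde v(A)$ for left-hand occurrences. This is enough to conclude $\fs{\calG}_{\calM}\ge\tilde v(\calG)\ge\tfrac1n>0$, so $\calG$ is not valid, which contradicts the hypothesis.
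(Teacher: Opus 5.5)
Your architecture is the paper's. The paper obtains this statement as the case $T=\varnothing$ of its approximate strong completeness theorem, proved with the same deduction tree: halting gives a derivation via the $\frac1n$-rules, (EC) and (EW), and non-halting gives a rejecting term model via K\"onig's lemma and compactness of $[0,1]^{X_\calL}$. The genuine gap is Step 3. Neither the truth lemma $\fs{A}_{\calM}=\tilde v(A)$ nor your one-sided remedy can be proved. The reason is that Step 2 constrains the values of the quantified ``atoms'' only through the separate bounds $\tilde v(\calS)\geq\frac1n$ on each component.

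Here is a concrete failure. Take $n\geq 2$ and $\calG={}\tto\exists xP(x)$. The branch has components ${}\tto\exists xP(x)$ and ${}\tto P(t)$ for all terms $t$. The evaluation $v(\exists xP(x))=1$, $v(P(t))=\frac1n$ meets every bound, so it is a legitimate output of Step 2. Yet in the term model $\fs{\exists xP(x)}_{\calM}=\inf_t v(P(t))=\frac1n<1=\tilde v(\exists xP(x))$, although the occurrence is on the right. Likewise $\fs{\calG}_{\calM}=\frac1n<1=\tilde v(\calG)$, so the chain $\fs{\calG}_{\calM}\geq\tilde v(\calG)$ breaks. The ``conflict in the limit'' you invoke does not exist: the instance component has value $\frac1n$, which is smaller than the old component's value but still $\geq\frac1n$. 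The right-hand $\forall$ case fails in the same way.

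The missing idea is to compare values at the level of whole sequents rather than formulas, as in the Claim of Lemma~\ref{lem:infinite tree implies invalid}. Prove $\fs{\calS}_{\calM}\geq\frac1n$ for every component $\calS$ of $\calH$ by induction on the complexity of $\calS$.

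\emph{Base case.} For quantifier-free $\calS$ the claim is just $\tilde v(\calS)\geq\frac1n$. This uses $v$ only on atomic formulas, so treating quantified formulas as atoms is harmless but never needed.

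\emph{Inductive step.} Pick a non-atomic occurrence and use saturation of the branch:
\begin{itemize}
\item $\fs{\Gamma\tto B\to C,\Delta}=\max\{\fs{\Gamma\tto\Delta},\fs{\Gamma,B\tto C,\Delta}\}$, and one of the two is on the branch.
\item $\fs{\Gamma,B\to C\tto\Delta}=\min\{\fs{\Gamma\tto\Delta},\fs{\Gamma,C\tto B,\Delta}\}$, and both are on the branch.
\item $\fs{\Gamma\tto\exists xB(x),\Delta}=\inf_t\fs{\Gamma\tto B(t),\Delta}$, since the domain is the set of terms and every instance lies on the branch.
\item $\fs{\Gamma,\exists xB(x)\tto\Delta}=\sup_t\fs{\Gamma,B(t)\tto\Delta}\geq\fs{\Gamma,B(a)\tto\Delta}$.
\item The $\forall$ cases are dual.
\end{itemize}
This pushes each quantifier through its entire context, which is exactly what a formula-by-formula comparison cannot do.

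A smaller point concerns Step 1. The rules $(\tto_{\frac1n}\forall)$ and $(\exists\tto_{\frac1n})$ cannot be obtained by handling the $n$ copies one at a time. The eigenvariable would still occur in the remaining copies of $A(a)$ in the conclusion, violating the eigenvariable condition. You must first rename it to $n$ fresh variables and then combine with (M), (S) and (EC), as in the proof of Lemma~\ref{lem:1/n logical rules}.
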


\section[The approximate strong completeness of {\sffamily GŁ∀}]{The approximate strong completeness of $\GL\forall$}
\label{sec:approximate strong completeness}
As mentioned at the end of the previous section, several results on approximate completeness have already been established.
However, they only concern validity with respect to all interpretations, rather than validity with respect to all models of a given theory.
In this section, we fill this gap by proving an approximate strong completeness theorem for all hypersequents, which is also the main theorem of this paper:

\begin{theorem}[Approximate strong completeness of $\GL\forall$]\label{thm:approximate strong completeness of GŁ∀}
    Let $\calL$ be a countable language, $T$ be an $\calL$-theory and $\calG$ be an $\calL$-hypersequent.
    If $T\models_{[0, 1]} \calG$, then for each $n\in\nn_{>0}$, there exist $m_1, \cdots, m_k\in\nn_{>0}$ and $C_1, \cdots, C_k\in T$ such that
    \begin{equation*}
        \vdash_{\GL\forall} m_1C_1\tto_{\frac1n}\bot\mid\cdots \mid m_kC_k\tto_{\frac1n}\bot \mid \calG_{\frac1n}.
    \end{equation*}
\end{theorem}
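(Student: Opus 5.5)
The plan is to prove the contrapositive through a labelled tableau (deduction tree) construction in the style of Buss, with the propositional completeness of $\GL$ (Theorem~\ref{thm:completeness of GŁ}) handling the finite pieces. Fix $n$. Since $\calL$ is countable, enumerate all $\calL$-terms $t_1, t_2, \dots$, all formulas of $T$ (each listed infinitely often), and all pairs (formula, term). Build a finitely branching tree $\DT_n(T, \calG)$ whose nodes are labelled by hypersequents extending $\calG$. At stage $s$ we do three things at every leaf. First, for each non-atomic formula occurrence, append the component(s) prescribed by the invertible-style rules: for $(\tto{\to})$ branch into the two premise components, for $({\to}\tto)$ add both components of the premise, for $(\tto\forall)$ and $(\exists\tto)$ add the instance with a fresh eigenvariable, and for $(\forall\tto)$ and $(\tto\exists)$ add the instance at the next term $t_i$. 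Second, add the component $m C\tto\bot$ for the next $C\in T$ and multiplicities $m$ up to $s$. Third, keep the old components, so that each leaf label contains its ancestors' labels. All of this is then relativised to the $\frac1n$-shifted form. Each step corresponds to a sound backward application of a $\GL\forall$ rule together with (EC)/(EW).

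Next, consider the propositional shadow. Treat each leaf hypersequent as propositional by regarding atoms and quantified formulas as variables, written $\prop(\cdot)$. If for every branch some finite stage has $\prop(\text{leaf})_{\frac1n}$ $[0,1]$-valid, then by K\"onig's lemma the tree can be cut at a finite height. At each leaf, Theorem~\ref{thm:completeness of GŁ} gives a $\GL$-derivation. Reading the construction bottom-up, where every added component is justified by a quantifier rule, a $\to$ rule with contraction, or weakening, yields a $\GL\forall$-derivation of $\calG_{\frac1n}$ together with finitely many side components $m_iC_i\tto_{\frac1n}\bot$. These side components stay in the end-hypersequent, since they are never discharged by axioms; this is exactly the form required in the statement. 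Some bookkeeping is needed: the $\frac1n$ shift multiplies each side of a component by $n$ and adds $\bot$, and the rules must be checked to commute with this shift, or else the construction is done directly on $\calG_{\frac1n}$ with scaled multiplicities.

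The main step, and the expected obstacle, is the case where some infinite branch never yields a valid propositional leaf. We then need a $[0,1]$-model of $T$ falsifying $\calG$ by more than $0$, in fact with value greater than $\frac1n$, contradicting $T\models_{[0,1]}\calG$. The branch gives an increasing sequence of finite hypersequents whose union $\calH$ is an infinite hypersequent, and each finite stage has a propositional evaluation $v_s$ with $\tilde v_s(\prop(\text{stage } s)_{\frac1n})>0$, that is, every component has value $>\frac1n$. By compactness of $[0,1]^{\text{atoms}}$ (Tychonoff), take a pointwise limit $v$ along a subsequence. Limits only give the weak inequality $\geq\frac1n$, which is why an approximation is needed, but that still suffices: $\fs{\calG}\geq\frac1n>0$ refutes truth of $\calG$. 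Then define the term model with domain the $\calL$-terms (free variables included) and $P^{\calM}(\vec t)=v(P(\vec t))$.

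The delicate part is a truth lemma. Because the branch contains, for each component, its rule-decomposed versions, the value of each quantified or implicational formula in $\calM$ must be bounded appropriately by $v$ on the relevant components. The $\sup$ and $\inf$ for quantifiers are handled by term witnesses: an eigenvariable for one direction and all terms $t_i$ for the other. The $\max\{\cdot,0\}$ in $\to$ is handled by the two-component branching. Carrying this out requires an induction on formula complexity, proving inequalities of the form $\fs{\Gamma\tto\Delta}_\frakI\geq\tilde v(\cdots)$ for components on the branch. I expect this induction, with the affine sums mixing several formulas in one component, to need the strengthened invariant that all "split" components are present, which is why both premises of $({\to}\tto)$ are added. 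Finally, the components $mC\tto\bot$ for all $m$ lie on the branch. Their values satisfy $\frac1n\le -m\fs{C}$ in the shifted reading, which forces $\fs{C}=0$, so $\frakI$ is a model of $T$. This contradicts $T\models_{[0,1]}\calG$.
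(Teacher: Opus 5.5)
Your proposal is correct, and its skeleton is the paper's: a Buss-style deduction tree, closing leaves via completeness of $\GL$ plus (EW), reading the finite tree upward with (EC), K\"onig's lemma, a Tychonoff limit of rejecting evaluations, the term interpretation, induction on complexity, and the components $mC\tto\bot$ for infinitely many $m$ forcing $\fs{C}=0$. (The bound there is $1-m\fs{C}\ge\frac1n$, not $\frac1n\le -m\fs{C}$.)

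Two points differ from the paper. First, your ``propositional shadow'' (quantified subformulas read as variables) is a weaker closure test than the paper's $\prop(\calG^\sigma)$, which keeps only the quantifier-free components. That is harmless. However, the values $v$ gives those quantified ``variables'' need not match the term model. So the truth-lemma invariant must be the uniform bound $\fs{\calS}_{\frakI_v}\ge\frac1n$ for every component on the branch, built up from the atomic (or quantifier-free) components. It cannot be a comparison with $\tilde v$ of the shadow, which can fail.

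Second, and more substantially, the step you call bookkeeping is the paper's Lemma~\ref{lem:1/n logical rules} (Wei's admissible $\frac1n$-rules). It is not a mere commutation. Applying $(\tto\forall)$ to one of the $n$ copies of $A(a)$ in $\bot,n\Gamma\tto nA(a),n\Delta$ leaves $a$ in the conclusion, so one must rename $a$ to $a_1,\dots,a_n$ and recombine with (M), (S), (EC).

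Your fallback avoids that lemma altogether:
\begin{itemize}
\item build the tree on $\calG_{\frac1n}$ itself;
\item attach the side components $\bot,nmC\tto n\bot$ to every node, so that two-premise rules see equal contexts;
\item decompose each of the $n$ copies separately, each with its own eigenvariable chosen outside $T$;
\item close a leaf when its propositional part is $[0,1]$-valid, with no further shift.
\end{itemize}
The finite case then needs only the ordinary rules of $\GL\forall$ with (EC) and (EW). An infinite branch gives $\fs{\calS}_{\frakI_v}\ge 0$ for all its components. In particular $\fs{\calG}_{\frakI_v}\ge\frac1n$, and $n-1-nm\fs{C}_{\frakI_v}\ge 0$ holds for infinitely many $m$, so $\fs{C}_{\frakI_v}=0$ and this is still a model of $T$ refuting $\calG$.

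So the paper's route keeps the tree on $\calG$ and relies on the known admissibility results. Yours is self-contained apart from the completeness of $\GL$, provided you commit to the fallback and use threshold $0$ instead of $\frac1n$ throughout.
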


\begin{remark}~
    \begin{enumerate}
        \item If necessary, we can require that $m_1=\cdots=m_k$.
        Let $m=\max_{1\leq i\leq k} m_i$.
        Then $mC_1\tto_{\frac1n}\bot\mid\cdots \mid mC_k\tto_{\frac1n}\bot \mid \calG_{\frac1n}$ is derivable from $m_1C_1\tto_{\frac1n}\bot\mid\cdots \mid m_kC_k\tto_{\frac1n}\bot \mid \calG_{\frac1n}$ by (IW).
        \item The validity of the hypersequent $m_1C_1\tto_{\frac1n}\bot\mid\cdots \mid m_kC_k\tto_{\frac1n}\bot \mid \calG_{\frac1n}$ can be stated as follows: For each $[0, 1]$-interpretation $\frakI$, if $\fs{C_i}_{\frakI}<\frac{n-1}{m_in}$ for each $1\leq i\leq k$, then $\fs{\calG}_{\frakI}\leq\frac1n$.
        Intuitively, for each $n\in\nn_{>0}$, the $\frac1n$-validity of $\calG$ is witnessed by a finite subset of $T$; cf. Theorem~\ref{thm:compactness of approximate logical entailment}.
    \end{enumerate}
\end{remark}

In the next two subsections, we present a proof of Theorem~\ref{thm:approximate strong completeness of GŁ∀}, which is inspired by \cite[Chapter 1, 3.3.7]{buss1998}.
We first introduce some notions.
An $\calL$-semihypersequent $\calG$ is said to be \emph{atomic} if each formula in $\calG$ is either atomic or $\bot$.
An $\calL$-semihypersequent $\calG$ is said to be \emph{propositional} if each formula in $\calG$ is quantifier-free.
For each $\calL$-semihypersequent $\calG$ (possibly infinite), denote by $\prop(\calG)$ the largest sub-multiset of $\calG$ consisting of all propositional $\calL$-semisequents.

The general idea of our proof is to construct an ``$n$th-deduction tree'' to search a $\GL\forall$-derivation of $\calG_{\frac1n}$ from the bottom up, working backwards from $\calG$ to $\frac1n$-valid propositional hypersequents.
Now we describe the construction of the tree.

\begin{definition}[Deduction tree]\label{def:deduction tree}
    Let $\calL$ be a countable language.
    For each $n\in\nn_{>0}$, $\calL$-theory $T$ and $\calL$-hypersequent $\calG$, we define a labelled binary tree $\DT_n(T, \calG)$, called the $n$th-\emph{deduction tree} of $\calG$ with respect to $T$.
    Each node $\sigma\in \DT_n(T, \calG)$ is labelled by an $\calL$-hypersequent $\calG^{\sigma}$.
    The construction of $\DT_n(T, \calG)$ proceeds in stages: at each stage, $\DT_n(T, \calG)$ will be modified (we keep the same name $\DT_n(T, \calG)$ for the new partially constructed $\DT_n(T, \calG)$).
    A leaf $\sigma$ is said to be \emph{active} if $\prop(\calG^\sigma)_{\frac1n}$ is not derivable in $\GL$, or equivalently, by soundness and completeness of $\GL$, if $\prop(\calG^\sigma)_{\frac1n}$ is not $[0, 1]$-valid.

    We assume that there are infinitely many free variables not occurring in $T$, and fix an enumeration $\{(A_i, t_i)\}_{i=1}^{+\infty}$ of all pairs of $\calL$-formulas and $\calL$-terms, satisfying the following conditions: for each $\calL$-formula $A$ and $\calL$-term $t$, there are infinitely many $i$ such that $(A, t)=(A_i, t_i)$.

    Initially, $\DT_n(T, \calG)$ consists of just one root $\varepsilon$ labelled by $\calG$.

    At $i$-th stage, we consider the pair $(A_i, t_i)$.
    If $A_i\in T$, we replace $\calG^{\sigma}$ by $lA_i\tto\bot \mid \calG^{\sigma}$ for each $\sigma\in \DT_n(T, \calG)$, where $l\in\nn_{>0}$ is the smallest number such that $lA_i\tto\bot\notin \calG^{\varepsilon}$.
    Then we do the following:
    \begin{itemize}
        \item If $A_i$ is atomic we do nothing and proceed to the next stage.
        \item If $A_i$ is of the form $B\to C$, 
        then 
        \begin{itemize}
            \item To each active leaf $\sigma$ with label $\calG^\sigma$ of the form $\calH\mid \Gamma\tto B\to C, \Delta$, we attach two children $\sigma1$ and $\sigma2$, with label $\calG^{\sigma1}=\calG^\sigma\mid \Gamma\tto \Delta$ and $\calG^{\sigma2}=\calG^\sigma\mid \Gamma, B\tto C, \Delta$ (See Figure~\ref{subfig:construction of deduction tree-⇒→}).
            \item To each active leaf $\sigma$ with label $\calG^\sigma$ of the form $\calH\mid \Gamma, B\to C\tto \Delta$, we attach one child $\sigma1$, with label $\calG^{\sigma1}=\calG^\sigma\mid \Gamma\tto \Delta\mid \Gamma, C\tto B, \Delta$ (See Figure~\ref{subfig:construction of deduction tree-→⇒}).
        \end{itemize}
        \item If $A_i$ is of the form $\exists x B(x)$, 
        then 
        \begin{itemize}
            \item To each active leaf $\sigma$ with label $\calG^\sigma$ of the form $\calH\mid \Gamma\tto \exists x B(x), \Delta$, we attach one child $\sigma1$, with label $\calG^{\sigma1}=\calG^\sigma\mid \Gamma\tto  B(t_i), \Delta$ (See Figure~\ref{subfig:construction of deduction tree-⇒∃}).
            We remark that this, together with the dual $\cdots, \forall x B(x)\tto\cdots$ case, is the only case where $t_i$ is used, and the definitions of terms and formulas ensure that $t_i$ is substitutible for $x$ in $B(x)$.
            \item To each active leaf $\sigma$ with label $\calG^\sigma$ of the form $\calH\mid \Gamma, \exists x B(x)\tto \Delta$, we attach one child $\sigma1$, with label $\calG^{\sigma1}=\calG^\sigma\mid \Gamma, B(a)\tto \Delta$, where $a$ is a fresh free variable, not used in $\DT_n(T, \calG)$ and $T$ yet (See Figure~\ref{subfig:construction of deduction tree-∃⇒}).
        \end{itemize}
        \item The case where $A_i$ is of the form $\forall x B(x)$ is dual to the previous case.
        \item If $A_i$ occurs more than one time in $\calG^\sigma$ for some active leaves $\sigma$, then we will carry out the construction on each occurrence of $A_i$ in $\calG^\sigma$ according to a certain order.
        We provide an example below (Example~\ref{eg: construction for the formula appearing many times}) to show how the construction works.
        \item If there is no active leaf remaining in $\DT_n(T, \calG)$, we finish the construction of $\DT_n(T, \calG)$; otherwise, we move to the next stage.
    \end{itemize}
\end{definition}

\figConstructionOfDeductionTree

\begin{remark}\label{rmk: construction of deduction tree}~
    \begin{enumerate}
        \item The construction of $\DT_n(T, \calG)$ may not halt.
        There are three cases: 
        \begin{itemize}
            \item If $T=\varnothing$ and $\calG$ is atomic, then $\DT_n(T, \calG)$ consists of only one root node $\varepsilon$ at the limit stage, with label $\calG^\varepsilon=\calG$.
            \item If $T$ is a non-empty set of atomic $\calL$-formulas and $\calG$ is atomic, then $\DT_n(T, \calG)$ consists of only one root node $\varepsilon$ at the limit stage, labelled by an infinite hypersequent.
            \item In other cases, $\DT_n(T, \calG)$ is infinite at the limit stage.
        \end{itemize}
        \item If $\tau$ is a child of $\sigma$, then $\calG^{\sigma}\subseteq\calG^{\tau}$.
    \end{enumerate}
\end{remark}

\begin{example}\label{eg: construction for the formula appearing many times}
    Suppose that we are constructing $\DT_n(T, \calG)$ at stage $(A_i=B\to C, t_i)$ and there is an active leaf $\sigma$ with label $\calG^\sigma=\calH\mid \Gamma_1, B^1\to^1 C^1 \tto \Delta_1\mid \Gamma_2 \tto B^2\to^2 C^2, B^3\to^3 C^3, \Delta_2$ (The superscript numbers are used to distinguish different occurrences of $A$, $B$ and $\to$).
    Assume that $B\to C$ does not occur in $\calH$, $\Gamma_1$, $\Delta_1$, $\Gamma_2$ or $\Delta_2$.
    We proceed with the construction in the order $B^1\to^1 C^1$, $B^2\to^2 C^2$ and $B^3\to^3 C^3$.
    By the construction, we will add nodes $\{\sigma1, \sigma11, \sigma12, \sigma111, \sigma112, \sigma121, \sigma122\}$ to $\DT_n(T, \calG)$ with labels
    \begin{align*}
         \calG^{\sigma1}&=\calG^{\sigma}\mid \Gamma_1 \tto \Delta_1\mid \Gamma_1, C^1 \tto B^1, \Delta_1,\\
         \calG^{\sigma11}&=\calG^{\sigma1}\mid \Gamma_2 \tto B^3\to^3 C^3, \Delta_2,\\
         \calG^{\sigma12}&=\calG^{\sigma1}\mid \Gamma_2, B^2 \tto C^2, B^3\to^3 C^3, \Delta_2,\\
         \calG^{\sigma111}&=\calG^{\sigma11}\mid \Gamma_2 \tto B^2\to^2 \Delta_2,\\
         \calG^{\sigma112}&=\calG^{\sigma11}\mid \Gamma_2, B^3 \tto C^3, B^2\to^2 C^2, \Delta_2,\\
         \calG^{\sigma121}&=\calG^{\sigma12}\mid \Gamma_2\tto B^2\to^2 C^2, \Delta_2,\\
         \calG^{\sigma122}&=\calG^{\sigma12}\mid \Gamma_2, B^3 \tto C^3, B^2\to^2 C^2, \Delta_2.
    \end{align*}
    The subtree of $\sigma$ is shown in Figure~{\ref{fig:construction for the formula appearing many times}}.
\end{example}

\figExampleOfRepeatedFormula


We now prove Theorem~\ref{thm:approximate strong completeness of GŁ∀} according to the following strategy.
First we show that if the construction of the $n$-th deduction tree halts, then we can obtain a desired derivation (Lemma~\ref{lem:finite tree implies provable}).
Then we show that if the construction of $\DT_n(T, \calG)$ does not halt, then there is a model of $T$ rejecting $\calG$ (Lemma~\ref{lem:infinite tree implies invalid}).
Theorem~\ref{thm:approximate strong completeness of GŁ∀} follows directly from the two lemmas above.

\subsection{Halting implies derivability}
\label{subsec:Halting implies derivability}

We will use following admissible approximate version of each logical rule in $\GL\forall$.

\begin{lemma}[\cite{wei2025}]\label{lem:1/n logical rules}
    The following rules are admissible in $\GL\forall$: 
    \begin{center}
        \scalebox{0.9}{
            \AxiomC{$\calG\mid \Gamma\tto_{\frac1n}\Delta\mid \Gamma, B\tto_{\frac1n} A, \Delta$}
            \RightLabel{\makebox[2em][l]{(${\to}\tto_{\frac1n}$)}}
            \UnaryInfC{$\calG\mid \Gamma, A\to B\tto_{\frac1n}\Delta$}
            \DisplayProof
            \qquad
            \AxiomC{$\calG\mid \Gamma\tto_{\frac1n}\Delta$}
            \AxiomC{$\calG\mid \Gamma, A\tto_{\frac1n} B, \Delta$}
            \RightLabel{\makebox[2em][l]{($\tto_{\frac1n}{\to}$)}}
            \BinaryInfC{$\calG\mid \Gamma\tto_{\frac1n} A\to B, \Delta$}
            \DisplayProof
        }\vspace{0.5em}
        \scalebox{0.9}{
            \AxiomC{$\calG\mid \Gamma, A(t)\tto_{\frac1n}\Delta$}
            \RightLabel{\makebox[2em][l]{($\forall\tto_{\frac1n}$)}}
            \UnaryInfC{$\calG\mid \Gamma, \forall xA(x)\tto_{\frac1n}\Delta$}
            \DisplayProof
            \qquad\qquad
            \AxiomC{$\calG\mid \Gamma\tto_{\frac1n} A(a), \Delta$}
            \RightLabel{\makebox[2em][l]{($\tto_{\frac1n}\forall$)}}
            \UnaryInfC{$\calG\mid \Gamma\tto_{\frac1n} \forall xA(x), \Delta$}
            \DisplayProof
        }\vspace{0.5em}
        \scalebox{0.9}{
            \AxiomC{$\calG\mid \Gamma, A(a)\tto_{\frac1n}\Delta$}
            \RightLabel{\makebox[2em][l]{($\exists\tto_{\frac1n}$)}}
            \UnaryInfC{$\calG\mid \Gamma, \exists xA(x)\tto_{\frac1n}\Delta$}
            \DisplayProof
            \qquad\qquad
            \AxiomC{$\calG\mid \Gamma\tto_{\frac1n} A(t), \Delta$}
            \RightLabel{\makebox[2em][l]{($\tto_{\frac1n}\exists$)}}
            \UnaryInfC{$\calG\mid \Gamma\tto_{\frac1n} \exists xA(x), \Delta$}
            \DisplayProof
        }
    \end{center}
    where $a$ is a free variable which does not occur in the conclusion of ($\exists\tto_{\frac1n}$) or ($\tto_{\frac1n}\forall$), $t$ is any $\calL$-term.
\end{lemma}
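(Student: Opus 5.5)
The plan is to derive each $\frac1n$-rule by applying the corresponding $\GL\forall$ rule several times, using (S), (M), (EC) and (IW) to move between the $n$ copies. Unfolding the notation, $\Gamma\tto_{\frac1n}\Delta$ is $\bot, n\Gamma\tto n\Delta$. So every rule in the statement has premises and conclusion in which the principal formula occurs with multiplicity $n$ in a single sequent. I would split the six rules into three groups.

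\textbf{Term rules.} For ($\tto_{\frac1n}\exists$) and ($\forall\tto_{\frac1n}$) nothing special is needed. The premise $\calG\mid \bot, n\Gamma\tto nA(t), n\Delta$ becomes $\calG\mid\bot, n\Gamma\tto n\exists xA(x), n\Delta$ after $n$ applications of ($\tto\exists$), always with the same term $t$. The case ($\forall\tto$) is symmetric.

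\textbf{Implication rules.} These need more care, because the premises of ($\tto{\to}$) and (${\to}\tto$) mention only one occurrence of the principal formula. I would first prove a multiplicity version for $\GL$ by induction on $k\le n$. For ($\tto_{\frac1n}{\to}$), the $k$-th step should give all hypersequents of the form
\[
\calG\mid \bot, n\Gamma, jA\tto jB, (k-j)(A\to B), n\Delta
\]
for $j\le k$. Each step applies ($\tto{\to}$) once more to one remaining copy of $A\to B$. The premises needed at the base are $\calG\mid \bot,n\Gamma, jA\tto jB, n\Delta$ for arbitrary $j$. These I would obtain from the two given premises by (M) and (S), splitting off $j$ copies of the pair $\Gamma, A\tto B,\Delta$ and using (EC) to merge duplicated components. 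The problem is to get the right multiplicities of $\Gamma$, $\Delta$ and $\bot$. I would handle it via the standard $\GL$ facts that $\calG\mid\Sigma\tto\Pi$ yields $\calG\mid k\Sigma\tto k\Pi$ (iterated (M) plus (EC)) and that (IW) can discard surplus $\bot$'s on the left. Alternatively, and more simply, I could invoke completeness of $\GL$ (Theorem~\ref{thm:completeness of GŁ}) after replacing quantified subformulas by fresh propositional atoms. The rule is then semantically sound by a direct computation with $\fs{\calG_{\frac1n}}=n\fs{\calG}-1$, and $\GL$ derivability is invariant under such atom substitution. I would take this route for the propositional rules (${\to}\tto_{\frac1n}$) and ($\tto_{\frac1n}{\to}$).

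\textbf{Eigenvariable rules.} The real obstacle is ($\tto_{\frac1n}\forall$) and dually ($\exists\tto_{\frac1n}$). Applying ($\tto\forall$) to one copy of $A(a)$ is blocked, since $a$ still occurs in the other $n-1$ copies, so naive iteration fails. I would prove by induction on the height of a $\GL\forall$-derivation a generalized admissibility statement. If $\vdash\calH$, where $\calH$ contains a distinguished family of right occurrences of $A(a)$ (possibly in several components), and $a$ occurs in $\calH$ only in these occurrences, then the hypersequent obtained by replacing all distinguished occurrences by $\forall xA(x)$ is derivable. In the inductive step one permutes the replacement above the last rule. The eigenvariable conditions of lower rules are preserved because $a$ is replaced everywhere. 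Rules whose principal formula is a distinguished occurrence are handled by closure of derivations under substitution of a fresh variable for $a$, together with the renaming of eigenvariables that this requires. The delicate cases are axioms and (M)/(S) steps, where distinguished occurrences may be paired with non-distinguished occurrences of $A(a)$ on the left, for instance in (ID) or where $a$ is introduced via ($\forall\tto$) with term $a$. To deal with these I would strengthen the invariant so that left occurrences of $a$ are replaced simultaneously, by a fresh variable tied to a matching ($\forall\tto$) instance. This is the step I expect to be hardest; if the strengthening becomes unwieldy, I would fall back on reproducing the argument of \cite{wei2025} for this case.
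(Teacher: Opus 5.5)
Your treatment of ($\tto_{\frac1n}\exists$) and ($\forall\tto_{\frac1n}$) is fine. The genuine gap is exactly where you locate the difficulty, in ($\tto_{\frac1n}\forall$) and ($\exists\tto_{\frac1n}$).

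The generalized replacement statement you propose is never established. Its hard cases are deferred to an unspecified ``strengthened invariant'' or to \cite{wei2025}. For example, an axiom $P(a)\tto P(a)$ later closed by ($\forall\tto$) with term $a$ breaks your invariant, and naive replacement would produce the invalid $P(a)\tto\forall xP(x)$. Terms such as $f(a)$ entering through ($\forall\tto$) or ($\tto\exists$) cause similar trouble. The statement is also much stronger than the lemma needs, since it allows occurrences spread over several components, and you give no argument that it holds in that generality.

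No induction on derivations is needed. The missing idea is to \emph{transpose} the $n$ copies of $A(a)$ into $n$ distinct instances:
\begin{enumerate-abc}
\item Rename $a$ in the given derivation to fresh variables $a_1,\dots,a_n$. Since $a$ does not occur in $\calG,\Gamma,\Delta$, this gives $\calG\mid\bot,n\Gamma\tto nA(a_i),n\Delta$ for each $i$.
\item Combine these by repeated (M) into $\calG\mid n\bot,n^2\Gamma\tto nA(a_1),\dots,nA(a_n),n^2\Delta$.
\item Split by (S) into $n$ copies of $\bot,n\Gamma\tto A(a_1),\dots,A(a_n),n\Delta$, and contract them by (EC).
\item Now each $a_i$ occurs exactly once, so $n$ applications of ($\tto\forall$) with eigenvariables $a_1,\dots,a_n$ give $\calG\mid\bot,n\Gamma\tto n\forall xA(x),n\Delta$.
\end{enumerate-abc}
The rule ($\exists\tto_{\frac1n}$) is dual. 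This is the paper's proof, and it is the same (M)/(S)/(EC) manoeuvre you already planned for the implication rules.

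For the implication rules, the shortcut you say you would actually take does not work. Replacing quantified subformulas by atoms need not preserve validity of a $\GL\forall$-derivable premise. For instance, $\bot,2\forall xP(x)\tto 2P(a)$ is derivable, but $\bot,2q\tto 2P(a)$ is not $[0,1]$-valid. Moreover, completeness of $\GL$ is about valid hypersequents, not about deriving a conclusion from given premises. So ``semantically sound rule plus completeness'' yields at most admissibility in $\GL$, not in $\GL\forall$.

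Your direct route is the right one and is easy to finish. For ($\tto_{\frac1n}{\to}$):
\begin{enumerate-abc}
\item Apply (M) to $n-j$ copies of the first premise and $j$ copies of the second, then (S) and (EC). This gives exactly $\calG\mid\bot,n\Gamma,jA\tto jB,n\Delta$, with no (IW) needed.
\item Then apply ($\tto{\to}$) to one copy of $A\to B$ at a time, passing through the hypersequents $\calG\mid\bot,n\Gamma,jA\tto jB,m(A\to B),n\Delta$ with $j+m\le n$.
\end{enumerate-abc}
Your family indexed by $(k-j)(A\to B)$ does not close under this step, because the two premises of ($\tto{\to}$) have different totals of $A$'s and copies of $A\to B$.

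For (${\to}\tto_{\frac1n}$), $n$ upward applications of (${\to}\tto$) produce a hypersequent whose components include both premise components. The remaining components are supplied by (EW).
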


\begin{proof}
    Although the proof can be found in \cite[Lemma 4.4.7 and 4.4.8]{wei2025}, we prove the admissibility of ($\tto_{\frac1n}\forall$) here, as it helps clarify the mid-hypersequent theorem; see Remark~\ref{rmk:midhypersequent}.

    Assume $\vdash_{\GL\forall} \calG\mid \bot, n\Gamma\tto nA(a), n\Delta$, where $a$ is a free variable which does not occur in $\calG$, $\Gamma$ or $\Delta$.
    We can replace $a$ in the derivation with new fresh variables $a_1, \cdots, a_n$, then we get $\vdash_{\GL\forall} \calG\mid \bot, n\Gamma\tto nA(a_i), n\Delta$ for each $i$.
    Then we have the following derivation
    \begin{center}
        \AxiomC{$\calG\mid \bot, n\Gamma\tto nA(a_1), n\Delta$ \qquad $\cdots$ \qquad $\calG\mid \bot, n\Gamma\tto nA(a_n), n\Delta$}
        \RightLabel{(M)}
        \UnaryInfC{$\calG\mid n\bot, n^2\Gamma\tto nA(a_1), \cdots, nA(a_n), n^2\Delta$}
        \RightLabel{(S)}
        \UnaryInfC{$\calG\mid \bot, n\Gamma\tto A(a_1), \cdots, A(a_n), n\Delta\mid\cdots\mid \bot, n\Gamma\tto A(a_1), \cdots, A(a_n), n\Delta$}
        \RightLabel{(EC)}
        \UnaryInfC{$\calG\mid \bot, n\Gamma\tto A(a_1), \cdots, A(a_n), n\Delta$}
        \RightLabel{($\tto\forall$)}
        \UnaryInfC{$\calG\mid \bot, n\Gamma\tto n\forall x A(x), n\Delta$}
        \DisplayProof.
    \end{center}
    Hence $\vdash_{\GL\forall} \calG\mid \Gamma\tto_{\frac1n} \forall xA(x), \Delta$.
\end{proof}

The following lemma shows that the construction of the deduction tree corresponds to a proof search.

\begin{lemma}\label{lem:parent is derivable from children}
    Assume $\DT_n(T, \calG)$ is given.
    Let $\sigma$ be a non-leaf node and $\{\sigma _i\}_{i\in I}$ be the set of its child(ren).
    Then $\calG^{\sigma}_{\frac1n}$ is derivable from $\{\calG^{\sigma_i}_{\frac1n}\}_{i\in I}$ by using (EC) and rules in Lemma~\ref{lem:1/n logical rules}.
\end{lemma}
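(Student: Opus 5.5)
We argue by cases on how the children of $\sigma$ arise in the construction of $\DT_n(T, \calG)$. Children are attached to a leaf only in the clauses for $A_i$ of the form $B\to C$, $\exists xB(x)$ or $\forall xB(x)$. When $A_i$ occurs several times in $\calG^\sigma$, as in Example~\ref{eg: construction for the formula appearing many times}, the passage from $\sigma$ to the final children is a finite composition of single-occurrence expansions. We therefore first treat one occurrence and then compose the resulting derivations along the finite subtree. The relabelling step, which replaces $\calG^\tau$ by $lA_i\tto\bot\mid\calG^\tau$ for \emph{every} node, adds the same components to parent and children. It therefore preserves the parent--child inclusion of Remark~\ref{rmk: construction of deduction tree}(2), and we work with the labels as they stand once $\DT_n(T, \calG)$ is given.

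The key observation is that each child label has the form $\calG^\sigma\mid\calS_1\mid\cdots$, so it still contains the principal component. For the $(\tto B\to C)$ case, write $\calG^\sigma=\calH\mid\Gamma\tto B\to C,\Delta$. The children are $\calG^\sigma_{\frac1n}\mid\Gamma\tto_{\frac1n}\Delta$ and $\calG^\sigma_{\frac1n}\mid\Gamma,B\tto_{\frac1n}C,\Delta$. We apply ($\tto_{\frac1n}{\to}$) from Lemma~\ref{lem:1/n logical rules} with side hypersequent $\calG^\sigma_{\frac1n}$, which gives $\calG^\sigma_{\frac1n}\mid\Gamma\tto_{\frac1n}B\to C,\Delta$. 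That component already occurs in $\calG^\sigma_{\frac1n}$, so one application of (EC) returns $\calG^\sigma_{\frac1n}$. The case ($B\to C\tto$) goes the same way with (${\to}\tto_{\frac1n}$) followed by (EC). The cases $\tto\exists xB(x)$ and $\forall xB(x)\tto$ use ($\tto_{\frac1n}\exists$) and ($\forall\tto_{\frac1n}$) with the term $t_i$ and then (EC).

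The cases that need care are $\exists xB(x)\tto$ and $\tto\forall xB(x)$, because Lemma~\ref{lem:1/n logical rules} requires the eigenvariable $a$ to be absent from the conclusion $\calG^\sigma_{\frac1n}\mid\Gamma,\exists xB(x)\tto_{\frac1n}\Delta$. Here we use that $a$ was chosen fresh, i.e.\ not used in $\DT_n(T,\calG)$ or $T$ at the time of the expansion. So $a$ does not occur in $\calG^\sigma$ as it was then. We must still check that later modifications of the labels cannot introduce $a$ into $\calG^\sigma$. The only later change to an existing node's label is the prefixing of $lA_j\tto\bot$ with $A_j\in T$, and $a\notin T$. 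Hence $a$ does not occur in $\calG^\sigma$ at any point, and the side condition holds. This bookkeeping is the main point to verify.

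Finally, for multiple occurrences we apply the single-occurrence argument successively, bottom-up through the intermediate nodes such as $\sigma1, \sigma11, \dots$ in Example~\ref{eg: construction for the formula appearing many times}. At each step the principal occurrence is still present in the label, since labels only grow. Composing these steps derives $\calG^\sigma_{\frac1n}$ from the children of $\sigma$ using only (EC) and the rules of Lemma~\ref{lem:1/n logical rules}.
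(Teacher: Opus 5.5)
Your proposal is correct and follows the paper's proof: in each of the six cases you apply the matching rule of Lemma~\ref{lem:1/n logical rules}, keeping the principal component in the side hypersequent, and then remove the duplicate with (EC), exactly as the paper does. Your check that the eigenvariable $a$ never enters $\calG^{\sigma}$ is a point the paper leaves implicit, since later stages only prefix components $lA_j\tto\bot$ with $A_j\in T$ and $a$ does not occur in $T$; your multi-occurrence composition is unnecessary, though, because the intermediate nodes $\sigma1, \sigma11, \dots$ are genuine tree nodes and each parent--child step is already a single-occurrence expansion.
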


\begin{proof}
    The followings are the required derivations:
    \begin{center}
        \AxiomC{$\calH_{\frac1n} \mid \Gamma\tto_{\frac1n} B\to C, \Delta\mid \Gamma\tto_{\frac1n} \Delta$}
        \AxiomC{$\calH_{\frac1n} \mid \Gamma\tto_{\frac1n} B\to C, \Delta\mid \Gamma, B\tto_{\frac1n} C, \Delta$}
        \RightLabel{($\tto_{\frac1n}{\to}$)}
        \BinaryInfC{$\calH_{\frac1n}\mid \Gamma\tto_{\frac1n} B\to C, \Delta\mid \Gamma\tto_{\frac1n} B\to C, \Delta$}
        \RightLabel{(EC)}
        \UnaryInfC{$\calH_{\frac1n}\mid \Gamma\tto_{\frac1n} B\to C, \Delta$}
        \DisplayProof,
        \\\vspace{1em}

        \AxiomC{$\calH_{\frac1n}\mid \Gamma, B\to C\tto_{\frac1n} \Delta\mid \Gamma\tto_{\frac1n} \Delta\mid \Gamma, C\tto_{\frac1n} B, \Delta$}
        \RightLabel{(${\to}\tto_{\frac1n}$)}
        \UnaryInfC{$\calH_{\frac1n}\mid \Gamma, B\to C\tto_{\frac1n} \Delta\mid \Gamma, B\to C\tto_{\frac1n} \Delta$}
        \RightLabel{(EC)}
        \UnaryInfC{$\calH_{\frac1n}\mid \Gamma, B\to C\tto_{\frac1n} \Delta$}
        \DisplayProof, 
        \\\vspace{1em}

        \AxiomC{$\calH_{\frac1n}\mid \Gamma\tto_{\frac1n} \exists x B(x), \Delta\mid \Gamma\tto_{\frac1n}  B(t), \Delta$}
        \RightLabel{($\tto_{\frac1n}{\exists}$)}
        \UnaryInfC{$\calH_{\frac1n}\mid \Gamma\tto_{\frac1n} \exists x B(x), \Delta\mid \Gamma\tto_{\frac1n} \exists x B(x), \Delta$}
        \RightLabel{(EC)}
        \UnaryInfC{$\calH_{\frac1n}\mid \Gamma\tto_{\frac1n} \exists x B(x), \Delta$}
        \DisplayProof, 
        \\\vspace{1em}

        \AxiomC{$\calH_{\frac1n}\mid \Gamma, \exists x B(x)\tto_{\frac1n}  \Delta\mid \Gamma, B(a)\tto_{\frac1n} \Delta$}
        \RightLabel{(${\exists}\tto_{\frac1n}$)}
        \UnaryInfC{$\calH_{\frac1n}\mid \Gamma, \exists x B(x)\tto_{\frac1n}  \Delta\mid \Gamma, \exists x B(x)\tto_{\frac1n} \Delta$}
        \RightLabel{(EC)}
        \UnaryInfC{$\calH_{\frac1n}\mid \Gamma, \exists x B(x)\tto_{\frac1n}  \Delta$}
        \DisplayProof, 
        \\\vspace{1em}

        \AxiomC{$\calH_{\frac1n}\mid \Gamma\tto_{\frac1n} \forall x B(x), \Delta\mid \Gamma\tto_{\frac1n}  B(a), \Delta$}
        \RightLabel{($\tto_{\frac1n}{\forall}$)}
        \UnaryInfC{$\calH_{\frac1n}\mid \Gamma\tto_{\frac1n} \forall x B(x), \Delta\mid \Gamma\tto_{\frac1n} \forall x B(x), \Delta$}
        \RightLabel{(EC)}
        \UnaryInfC{$\calH_{\frac1n}\mid \Gamma\tto_{\frac1n} \forall x B(x), \Delta$}
        \DisplayProof, 
        \\\vspace{1em}

        \AxiomC{$\calH_{\frac1n}\mid \Gamma, \forall x B(x)\tto_{\frac1n}  \Delta\mid \Gamma, B(t)\tto_{\frac1n} \Delta$}
        \RightLabel{(${\forall}\tto_{\frac1n}$)}
        \UnaryInfC{$\calH_{\frac1n}\mid \Gamma, \forall x B(x)\tto_{\frac1n}  \Delta\mid \Gamma, \forall x B(x)\tto_{\frac1n} \Delta$}
        \RightLabel{(EC)}
        \UnaryInfC{$\calH_{\frac1n}\mid \Gamma, \forall x B(x)\tto_{\frac1n}  \Delta$}
        \DisplayProof.
    \end{center}
    \vspace{-1.5em}
\end{proof}

We can now prove the main lemma of this subsection.

\begin{lemma}\label{lem:finite tree implies provable}
    If the construction of $\DT_n(T, \calG)$ halts, then there exist $m_1, \cdots, m_k\in\nn_{>0}$ and $C_1, \cdots, C_k\in T$ such that
    \begin{equation*}
        \vdash_{\GL\forall} m_1C_1\tto_{\frac1n}\bot\mid\cdots \mid m_kC_k\tto_{\frac1n}\bot \mid \calG_{\frac1n}.
    \end{equation*}
\end{lemma}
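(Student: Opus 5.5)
If the construction halts, it does so after some finite stage $N$, and the tree is then finite with no active leaves. So for every leaf $\sigma$, the hypersequent $\prop(\calG^\sigma)_{\frac1n}$ is derivable in $\GL$, hence in $\GL\forall$.

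The approach is to push derivability from the leaves down to the root, using Lemma~\ref{lem:parent is derivable from children}. The one complication is that labels were modified after nodes were created. At stage $i$ with $A_i\in T$, every label $\calG^\sigma$ is replaced by $lA_i\tto\bot\mid\calG^\sigma$. The tree must therefore be read with the final labels at stage $N$. Because the prefixes $lA_i\tto\bot$ are added uniformly to all nodes, it holds for every $\sigma$ that
\begin{equation*}
    \calG^\sigma = m_1C_1\tto\bot\mid\cdots\mid m_kC_k\tto\bot\mid \calK^\sigma ,
\end{equation*}
where $m_jC_j\tto\bot$ are the theory components added up to stage $N$ and $\calK^\sigma$ is the label the construction would give without these additions. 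The root has $\calK^\varepsilon=\calG$. Moreover, $(m_jC_j\tto\bot)_{\frac1n}$ is exactly $m_jC_j\tto_{\frac1n}\bot$. So $\calG^\varepsilon_{\frac1n}$ is precisely the target hypersequent.

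The first step is to check that Lemma~\ref{lem:parent is derivable from children} still applies to the final labels. The parent–child relations are $\calG^{\sigma i}=\calG^\sigma\mid(\text{new components})$. Adding the same extra components to both parent and child preserves this shape, and the new components stay in the side hypersequent $\calH$ throughout. There is one subtlety. A child created at a later stage also receives the theory components added at earlier stages, since those are already part of its parent's label when it is attached. Components added after its creation are added to all nodes simultaneously. In both cases the child's label equals the parent's label plus the fresh components, as required.

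The eigenvariable conditions must also hold. In the $(\exists\tto)$ and $(\tto\forall)$ steps the variable $a$ was chosen fresh, not occurring in $T$ or anywhere in the tree so far. The final parent label, however, contains theory formulas $C_j$ added later, so I must confirm $a$ does not occur in them. This holds because $a$ is not in $T$ at all.

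Next, at each leaf, weaken the $\GL$-derivation of $\prop(\calG^\sigma)_{\frac1n}$ to one of $\calG^\sigma_{\frac1n}$ by repeated (EW). This is possible since $\prop(\calG^\sigma)\subseteq\calG^\sigma$. There is one degenerate case: $\prop(\calG^\sigma)$ may be empty. Then its validity would have to be read via the infimum over the empty set, giving $+\infty$, so it is not valid; an empty $\prop$ therefore cannot occur at a leaf that is not active. Then induct upward on the height of the finite tree. At each internal node $\sigma$ with children $\sigma_i$, Lemma~\ref{lem:parent is derivable from children} (with Lemma~\ref{lem:1/n logical rules} and (EC)) gives $\vdash_{\GL\forall}\calG^\sigma_{\frac1n}$ from the children.

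A node can have several descendants produced in one stage when $A_i$ occurs several times, as in Example~\ref{eg: construction for the formula appearing many times}. Each individual parent–child step is still one of the patterns in Lemma~\ref{lem:parent is derivable from children}, so the induction is unaffected. At the root this yields the claimed derivation, with $C_1,\dots,C_k$ the finitely many theory formulas added before halting.

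I expect the main obstacle to be the bookkeeping of the stage-dependent relabelling. Specifically, I need to make precise that the final labels still satisfy the child-shape hypotheses of Lemma~\ref{lem:parent is derivable from children}, and that eigenvariable freshness survives the later addition of theory formulas. The rest is a routine induction.
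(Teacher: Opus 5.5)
Your proposal is correct and follows the same route as the paper. You induct on height in the finite tree, obtain each non-active leaf from $\prop(\calG^\sigma)_{\frac1n}$ by (EW), pass to parents via Lemma~\ref{lem:parent is derivable from children}, and read off the target from the root label $m_1C_1\tto\bot\mid\cdots\mid m_kC_k\tto\bot\mid\calG$.

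Your extra checks make explicit what the paper leaves tacit: child labels keep the required shape after relabelling, eigenvariables avoid later-added theory formulas, and non-active leaves have nonempty $\prop$. The only slip is calling $\mathcal{K}^\sigma$ ``the label the construction would give without these additions''. That is false for non-root nodes, because the theory components are themselves decomposed and affect activeness. It is harmless, since you only ever use $\mathcal{K}^\varepsilon=\calG$.
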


\begin{proof}
    If the construction of $\DT_n(T, \calG)$ halts, then $\DT_n(T, \calG)$ is finite and each leaf of $\DT_n(T, \calG)$ is non-active.
    We will show that $\calG^\sigma_{\frac1n}$ is derivable in $\GL\forall$ for each $\sigma\in \DT_n(T, \calG)$ by induction on the height of $\sigma$.

    If $\sigma$ is a leaf, then $\prop(\calG^\sigma)_{\frac1n}$ is derivable in $\GL$ by the non-activeness of $\sigma$.
    Since $\prop(\calG^\sigma)\subseteq \calG^\sigma$, $\calG^\sigma_{\frac1n}$ is derivable from $\prop(\calG^\sigma)_{\frac1n}$ by (EW).
    Hence $\calG^\sigma_{\frac1n}$ is derivable in $\GL\forall$.

    If $\sigma$ is a non-leaf node with children $\{\sigma_i\}_{i\in I}$, then by induction hypothesis, $\{\calG^{\sigma_i}_{\frac1n}\}_{i\in I}$ are derivable in $\GL\forall$.
    Lemma~\ref{lem:parent is derivable from children} shows that $\calG^\sigma_{\frac1n}$ is derivable from $\{\calG^{\sigma_i}_{\frac1n}\}_{i\in I}$ by using (EC) and rules in Lemma~\ref{lem:1/n logical rules}.
    By the admissibility of these rules, $\calG^\sigma_{\frac1n}$ is also derivable in $\GL\forall$.
    This completes the induction.
    In particular, $\calG^{\varepsilon}_{\frac1n}$ is derivable in $\GL\forall$.
    We finish the proof by noting that $\calG^{\varepsilon}$ is of the form
    \begin{equation*}
        m_1C_1\tto\bot\mid\cdots \mid m_kC_k\tto\bot \mid \calG
    \end{equation*}
    where $C_1, \cdots, C_k\in T$ and $m_1, \cdots, m_k\in\nn_{>0}$.
\end{proof}

Before continuing with the proof of Theorem~\ref{thm:approximate strong completeness of GŁ∀}, we pause here to verify Lemma~\ref{lem:finite tree implies provable} with a well-known example---drinker's paradox; see also \cite{baaz2010, metcalfe2008, wei2025}.

\figDeductionTreeOfDrinkerParadox

\begin{example}[Drinker's paradox]\label{eg:drinker's paradox}
    Assume that language $\calL$ contains a unary predicate $P$.
    We will consider the $\calL$-formula $\exists x \forall y (P(x)\to P(y))$, which is $[0, 1]$-valid, yet is not derivable in $\GL\forall$; see \cite{baaz2010, metcalfe2008}.
    We now construct a deduction tree $\DT_n(\varnothing, {}\tto \exists x \forall y (P(x)\to P(y)))$ and recover a $\GL\forall$-derivation of ${}\tto_{\frac1n} \exists x \forall y (P(x)\to P(y))$; while an alternative approach to find such a derivation can be found in \cite{wei2025}.
    We only present the case $n=2$; other cases are easily generalized.
    We fix an enumeration $\{(A_i, t_i)\}_{i=1}^{+\infty}$ satisfying
    \begin{align*}
        A_1 &= \exists x \forall y (P(x)\to P(y)), \quad t_1=a, \\
        A_2 &= \forall y (P(a)\to P(y)), \\
        A_3 &= \exists x \forall y (P(x)\to P(y)), \quad t_3=b, \\
        A_4 &= \forall y (P(b)\to P(y)).
    \end{align*}
    The deduction tree $\DT_2(\varnothing, {}\tto \exists x\forall y(P(x)\to P(y)))$ after first four stages of its construction, shown in Figure~\ref{fig:example of 2-deduction tree of drinker's paradox}, has five nodes $\varepsilon, 1, 11, 111, 1111$ with labels
    \begin{align*}
        \calG^{\varepsilon}&= {{} \tto \exists x \forall y (P(x)\to P(y))}, \\
        \calG^{1}&=\calG^{\varepsilon} \mid {{} \tto \forall y(P(a)\to P(y))}, \\
        \calG^{11}&=\calG^1 \mid {{} \tto P(a)\to P(b)}, \\
        \calG^{111}&=\calG^{11} \mid {{} \tto \forall y(P(b)\to P(y))}, \\
        \calG^{1111}&=\calG^{111} \mid {{} \tto P(b)\to P(c)}.
    \end{align*}
    Fresh free variables used in stage 2 and 4 are $b$ and $c$ respectively.
    One can show that
    \begin{equation*}
        \prop(\calG^{1111})_{\frac12}= {} \tto_{\frac12} P(a)\to P(b)\mid {} \tto_{\frac12} P(b)\to P(c)
    \end{equation*}
    is $[0, 1]$-valid.
    Hence the only leaf $1111$ of $\DT_2(\varnothing, {}\tto \exists x \forall y (P(x)\to P(y)))$ is nonactive, and construction halts here.
    According to the proof of Lemma~\ref{lem:finite tree implies provable}, we can get a $\GL\forall$-derivation of $\tto_{\frac12} \exists x\forall y(P(x)\to P(y))$ along the deduction tree bottom-up, which is shown in Figure~\ref{fig:strict derivation}.
    Another simplified derivation, obtained by deleting unnecessary applications of (EW) and (EC), is shown in Figure~\ref{fig:simplified derivation}.
\end{example}

\figOriginalDerivationOfDrinkerParadox

\figSimplifiedDerivationOfDrinkerParadox

\subsection{Non-halting implies existence of a rejecting model}
\label{subsec:Non-halting implies existence of a rejecting model}

We resume the proof of Theorem~\ref{thm:approximate strong completeness of GŁ∀}.
The rejecting model constructed later is the so-called ``term interpretation''.

\begin{definition}\label{def:term interpretation}
    Denote by $X_{\calL}$ the set of all atomic $\calL$-formulas.
    For each $[0, 1]$-evaluation $v\in [0, 1]^{X_{\calL}}$, we define a $[0, 1]$-interpretation $\frakI_v=(\calM_v, \beta)$ as follows: 
    \begin{enumerate-abc}
        \item The universe $M$ of $\calM_v$ is the set of $\calL$-terms.
        \item For each constant symbol $e$, $e^{\calM_v}$ is the $\calL$-term $e$ itself.
        \item For each $k$-ary function symbol $f$, $f^{\calM_v}(t_1, \cdots, t_k)$ is the $\calL$-term $f(t_1, \cdots, t_k)$.
        \item For each $k$-ary predicate symbol $P$, $P^{\calM_v}(t_1, \cdots, t_k)=v(P(t_1, \cdots, t_k))$.
        \item For each free variable $a$, $\beta(a)$ is $a$ itself.
        For each bound variable $x$, $\beta(x)$ can be chosen arbitrarily.
    \end{enumerate-abc}
    Such a $[0, 1]$-interpretation is called a \emph{term interpretation}.
\end{definition}

\begin{remark}\label{rmk:term interpretation}
    Let $\frakI_v=(\calM_v, \beta)$ be a term interpretation.
    Note that $\beta(t)=t$ for each $\calL$-term $t$.
    Hence 
    \begin{equation*}
        \fs{\exists xA(x)}_{\frakI_v}
        =\inf_{t\in M} \fs{A(x)}_{\frakI_v[x\mapsto t]}
        =\inf_{t\in M} \fs{A(x)}_{\frakI_v[x\mapsto \beta(t)]}
        =\inf_{t\in M} \fs{A(t)}_{\frakI_v}
    \end{equation*}
    for each $\calL$-formula of the form $\exists xA(x)$ by substitution lemma (Proposition~\ref{prop:substitution lemma}).
    Similarly, $\fs{\forall xA(x)}_{\frakI_v}=\sup_{t\in M} \fs{A(t)}_{\frakI_v}$ for each $\calL$-formula of the form $\forall xA(x)$.
\end{remark}

To apply induction, we define the \emph{complexity} of $\calL$-semihypersequents inductively as follows: 
\begin{enumerate-abc}
    \item $\cp(\bot)=\cp(A)=0$, where $A$ is atomic.
    \item $\cp(A\to B)=\cp(A)+\cp(B)+1$.
    \item $\cp(\exists xA)=\cp(\forall xA)=\cp(A)+1$.
    \item $\cp(\Gamma)=\sum_{A\in\Gamma} \cp(A)$.
    \item $\cp(\Gamma_1\tto\Delta_1\mid\cdots\mid \Gamma_k\tto \Delta_k)=\sum_{i=1}^k (\cp(\Gamma_i)+\cp(\Delta_i))$.
\end{enumerate-abc}
Actually, the complexity of $\calG$ is the number of logical connectives and quantifiers it contains.

The following is a technical lemma at the propositional level.
It shows that if every finite sub-hypersequent admits a rejecting evaluation with a uniform positive lower bound, so does the original infinite hypersequent.
The proof uses a compactness argument; see \cite{munkres2000} for the topological notions involved.

\begin{lemma}\label{lem:propositional hypersequent with lower bound}
    Let $\calH$ be a propositional $\calL$-hypersequent (possibly infinite) and $\epsilon>0$.
    If for each finite $\calL$-sub-hypersequent $\calH'$ of $\calH$, $\tilde{w}(\calH')\geq\epsilon$ holds for some $w\in [0, 1]^{X_\calL}$, then there exists a $v\in [0, 1]^{X_\calL}$ such that $\tilde{v}(\calH)\geq\epsilon$.
\end{lemma}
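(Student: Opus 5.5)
The plan is a compactness argument in the product space $[0,1]^{X_\calL}$, which is compact Hausdorff by Tychonoff's theorem. For each sequent $\calS=\Gamma\tto\Delta$ in $\calH$, let
\begin{equation*}
    F_\calS=\{v\in[0,1]^{X_\calL}: \tilde{v}(\calS)\geq\epsilon\}.
\end{equation*}
By definition $\tilde{v}(\calH)=\inf_{\calS\in\calH}\tilde{v}(\calS)$. Hence $\tilde{v}(\calH)\geq\epsilon$ holds exactly when $v\in\bigcap_{\calS\in\calH}F_\calS$. So it suffices to show that this intersection is nonempty.

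First I show that each $F_\calS$ is closed. A propositional sequent involves only finitely many atomic formulas. The map $v\mapsto\tilde{v}(A)$ for a quantifier-free formula $A$ is built from coordinate projections, the constant $1$, and the continuous operation $(p,q)\mapsto\max\{q-p,0\}$, so it is continuous in the product topology. Consequently $v\mapsto\tilde{v}(\calS)=\tilde{v}(\Delta)-\tilde{v}(\Gamma)$ is a finite sum and difference of continuous maps, hence continuous. Therefore $F_\calS$, the preimage of $[\epsilon,+\infty)$, is closed.

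Next I verify the finite intersection property. Take finitely many sequents $\calS_1,\dots,\calS_k$ of $\calH$, counted as distinct occurrences of the multiset, and let $\calH'$ be the finite sub-hypersequent they form. By hypothesis some $w$ satisfies $\tilde{w}(\calH')\geq\epsilon$. Since a finite infimum is a minimum, $\tilde{w}(\calS_j)\geq\epsilon$ for every $j$, so $w\in\bigcap_j F_{\calS_j}$.

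By compactness, a family of closed sets with the finite intersection property has nonempty intersection. Any $v$ in $\bigcap_{\calS\in\calH}F_\calS$ then satisfies $\tilde{v}(\calS)\geq\epsilon$ for all $\calS\in\calH$, and hence $\tilde{v}(\calH)\geq\epsilon$.

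The only delicate point is the continuity step. It works because propositional formulas have no quantifiers, so each value $\tilde{v}(\calS)$ depends on finitely many coordinates through continuous operations.

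The non-strict inequality $\geq\epsilon$ matters as well. It is what makes the sets $F_\calS$ closed, and it passes cleanly through the infimum over the possibly infinite hypersequent. A strict inequality would give open sets, and the compactness argument would fail.
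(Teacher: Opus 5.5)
Your proposal is correct and is essentially the paper's proof. The sets $\{v : \tilde{v}(\calS)\geq\epsilon\}$ are closed in the compact product space $[0,1]^{X_\calL}$ and have the finite intersection property by hypothesis, so their intersection is nonempty. The only cosmetic difference is how continuity of $v\mapsto\tilde{v}(\calS)$ is obtained: you get it formula by formula, by composing with $(p,q)\mapsto\max\{q-p,0\}$ and summing, whereas the paper inducts on the complexity of the sequent using identities such as $f_{\Gamma\tto A\to B,\Delta}=\max\{f_{\Gamma\tto\Delta},f_{\Gamma,A\tto B,\Delta}\}$.
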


\begin{proof}
    For each propositional $\calL$-sequent $\calS$, we define a function $f_{\calS}: [0, 1]^{X_\calL}\to\rr$, $v\mapsto \tilde{v}(\calS)$.
    We claim that $f_{\calS}$ is continuous for each propositional $\calL$-sequent $\calS=\Gamma\tto\Delta$, where $[0, 1]^{X_{\calL}}$ is equipped with the product topology induced by the standard topology on $[0, 1]$.
    We show the claim by induction on $\cp(\calS)$.

    If $\cp(\calS)=0$, i.e., $\calS$ is atomic, then we show the claim by induction on $|\Gamma|+|\Delta|$.
    If $\calS$ is the sequent ${}\tto {}$, then $f_{\calS}$ is the constant function $0$, hence continuous.
    Note that 
    \begin{alignat*}{3}
        f_{\Gamma\tto\Delta, \bot}&=f_{\Gamma\tto\Delta}+1, \qquad
        f_{\Gamma\tto\Delta, A}=f_{\Gamma\tto\Delta}+\pi_A, \\
        f_{\bot, \Gamma\tto\Delta}&=f_{\Gamma\tto\Delta}-1, \qquad
        f_{A, \Gamma\tto\Delta}=f_{\Gamma\tto\Delta}-\pi_A, 
    \end{alignat*}
    where $A\in X_{\calL}$ and $\pi_A: [0, 1]^{X_{\calL}}\to [0, 1]$, $\pi_A(v)=v(A)$ is the projection.
    It is well known that projections are continuous with respect to the product topology.
    Hence all of $f_{\Gamma\tto\Delta, \bot}$, $f_{\bot, \Gamma\tto\Delta}$, $f_{\Gamma\tto\Delta, A}$ and $f_{A, \Gamma\tto\Delta}$ are continuous by induction hypothesis.
    This completes the induction.

    We note that 
    \begin{align*}
        f_{\Gamma\tto A\to B, \Delta}&=\max\{f_{\Gamma\tto\Delta}, f_{\Gamma, A\tto B, \Delta}\}, \\
        f_{\Gamma, A\to B\tto\Delta}&=\min\{f_{\Gamma\tto\Delta}, f_{\Gamma, B\tto A, \Delta}\}, 
    \end{align*}
    hence the continuity of $f_{\Gamma\tto\Delta}$ and $f_{\Gamma, A\tto B, \Delta}$ implies the continuity of $f_{\Gamma\tto A\to B, \Delta}$, and the continuity of $f_{\Gamma\tto\Delta}$ and $f_{\Gamma, B\tto A, \Delta}$ implies the continuity of $f_{\Gamma, A\to B\tto\Delta}$.
    The first induction goes through, and the claim follows.

    By the above claim we know that $\{f_{\calS}^{-1}([\epsilon, +\infty))\}_{\calS\in \calH}$ is a family of closed subsets of $[0, 1]^{X_{\calL}}$.
    The assumption given in this lemma implies that this family satisfies the finite intersection property.
    Hence $\bigcap_{\calS\in \calH} f_{\calS}^{-1}([\epsilon, +\infty))$ is nonempty by the compactness of $[0, 1]^{X_{\calL}}$.
    Let $v\in \bigcap_{\calS\in \calH} f_{\calS}^{-1}([\epsilon, +\infty))$, then $\tilde{v}(\calH)=\inf_{\calS\in \calH} \tilde{v}(\calS)=\inf_{\calS\in \calH} f_\calS(v)\geq\epsilon$, as required.
\end{proof}

We are now in a position to prove the existence of a rejecting model.

\begin{lemma}\label{lem:infinite tree implies invalid}
    If the construction of $\DT_n(T, \calG)$ does not halt, then there is a $[0, 1]$-model $\frakI$ of $T$ such that $\fs{\calG}_{\frakI}\geq\frac1n$.
\end{lemma}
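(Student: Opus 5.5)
If the construction of $\DT_n(T, \calG)$ never halts, then at every finite stage the partially constructed tree has an active leaf. Since each active leaf has at most finitely many children added at each stage (the tree is finitely branching), König's lemma gives an infinite branch $\varepsilon=\sigma_0, \sigma_1, \sigma_2, \cdots$ all of whose nodes are active at the moment they are leaves. One caveat: leaves can be deactivated and never extended, so I will apply König's lemma to the subtree of nodes that have active descendants at every later stage. In the degenerate case of Remark~\ref{rmk: construction of deduction tree}, where the tree stays a single node, the branch is constant. Let $\calH=\bigcup_j \calG^{\sigma_j}$, a possibly infinite hypersequent, using the monotonicity $\calG^\sigma\subseteq\calG^\tau$. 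Since the root label grows at every stage with $A_i\in T$, $\calH$ contains $lC\tto\bot$ for every $C\in T$ and every $l\in\nn_{>0}$, because each $C$ is enumerated infinitely often.

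Next I use the propositional part. Every finite sub-hypersequent of $\prop(\calH)$ lies in some $\prop(\calG^{\sigma_j})$, which is active. So $\prop(\calG^{\sigma_j})_{\frac1n}$ is not $[0, 1]$-valid, and some $w$ gives $n\tilde w(\prop(\calG^{\sigma_j}))-1>0$, hence $\tilde w(\cdot)>\frac1n$ on that finite part. Lemma~\ref{lem:propositional hypersequent with lower bound} with $\epsilon=\frac1n$ then yields $v\in[0, 1]^{X_\calL}$ with $\tilde v(\prop(\calH))\geq\frac1n$. I take $\frakI=\frakI_v$, the term interpretation of Definition~\ref{def:term interpretation}. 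On quantifier-free formulas $\fs{\cdot}_{\frakI_v}$ agrees with $\tilde v$.

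The core step is a truth lemma: for every sequent $\calS\in\calH$, $\fs{\calS}_{\frakI_v}\geq\frac1n$. I prove it by induction on $\cp(\calS)$, with the base case given above. For the inductive step, pick a non-atomic formula occurrence $A$ in $\calS$, say in $\calS\in\calG^{\sigma_j}$. The enumeration revisits $A$ at some stage after $\sigma_j$ appears, and at that stage the branch node then current is an active leaf, so it is expanded.
\begin{itemize}
\item If $\calS=\Gamma\tto B\to C,\Delta$, the branch passes through one of the two children, so $\Gamma\tto\Delta\in\calH$ or $\Gamma,B\tto C,\Delta\in\calH$. Since $\fs{\Gamma\tto B\to C,\Delta}=\max\{\fs{\Gamma\tto\Delta},\fs{\Gamma,B\tto C,\Delta}\}$, the induction hypothesis on the lower-complexity sequent gives the bound.
\item If $\calS=\Gamma,B\to C\tto\Delta$, both $\Gamma\tto\Delta$ and $\Gamma,C\tto B,\Delta$ lie in $\calH$, and the value is their minimum.
\item If $\calS=\Gamma\tto\exists xB(x),\Delta$, every term $t$ is used in some later stage, so $\Gamma\tto B(t),\Delta\in\calH$ for all $t$. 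Remark~\ref{rmk:term interpretation} gives $\fs{\exists xB}=\inf_t\fs{B(t)}$, and the infimum of values each at least $\frac1n$ is at least $\frac1n$.
\item If $\calS=\Gamma,\exists xB(x)\tto\Delta$, a witness $B(a)$ is added, and $\fs{\exists xB}\leq\fs{B(a)}$ gives $\fs{\calS}\geq\fs{\Gamma,B(a)\tto\Delta}\geq\frac1n$.
\item The $\forall$ cases are dual.
\end{itemize}
I expect this step to be the main obstacle, for two reasons. First, the induction must be on a measure, such as the complexity of the sequent, that strictly decreases when one occurrence is expanded. This holds even with quantifier instances, since $\cp(B(t))=\cp(B)$. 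Second, I must check that each occurrence, including repeated occurrences handled as in Example~\ref{eg: construction for the formula appearing many times}, is actually processed along the chosen branch.

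Finally, $\calG\subseteq\calH$ gives $\fs{\calG}_{\frakI_v}\geq\frac1n$. For each $C\in T$ and every $l$, $lC\tto\bot\in\calH$ gives $1-l\fs{C}\geq\frac1n$, so $\fs{C}\leq\frac{n-1}{ln}$ for all $l$. Hence $\fs{C}=0$, and $\frakI_v$ is a $[0, 1]$-model of $T$.
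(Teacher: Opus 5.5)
Your proof is correct and follows essentially the same route as the paper. Both use an infinite branch obtained from König's lemma, the limit hypersequent, Lemma~\ref{lem:propositional hypersequent with lower bound} with $\epsilon=\frac1n$ for the propositional part, the term interpretation $\frakI_v$, a truth lemma by induction on $\cp(\calS)$, and the sequents $lC\tto\bot$ to force $\fs{C}_{\frakI_v}=0$. The one small difference is how you pick the branch: you restrict to nodes that keep active descendants at every later stage, while the paper splits into three cases and applies weak König's lemma to the final tree. Your choice makes explicit why every branch label, at every stage, has a non-valid propositional part.
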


\begin{proof}
    As mentioned in Remark~\ref{rmk: construction of deduction tree}~(1), there are three cases.
    If $T=\varnothing$ and $\calG$ is atomic, then $\calG_{\frac1n}=\prop(\calG)_{\frac1n}$ is not $[0, 1]$-valid, which means that there is a $v\in [0, 1]^{X_\calL}$ such that $\tilde{v}(\calG_{\frac1n})>0$, i.e., $\tilde{v}(\calG)>\frac1n$.
    Hence $\fs{\calG}_{\frakI_v}\geq\frac1n$, as desired.

    If $T$ is a non-empty set of atomic $\calL$-formulas and $\calG$ is atomic, let $\calG^{\infty}=\calG^{\varepsilon}$.
    If $\DT_n(T, \calG)$ is infinite, since $\DT_n(T, \calG)$ is a binary tree, there is an infinite path $(\sigma_0=\varepsilon, \sigma_1, \sigma_2, \cdots)$ by weak König's lemma.
    Since $\calG^{\sigma_0}\subseteq\calG^{\sigma_1}\subseteq\calG^{\sigma_2}\subseteq\cdots$, we define $\calG^{\infty}=\bigsqcup_{i=0}^{+\infty} \calG^{\sigma_i}$ be the limit.
    Note that $\calG^{\infty}$ is an infinite $\calL$-hypersequent in both cases.

    By the condition of activeness, for each finite $\calL$-sub-hypersequent $\calH$ of $\prop(\calG^{\infty})$, $\calH_{\frac1n}$ is not $[0, 1]$-valid, that is, there exists a $w\in [0, 1]^{X_\calL}$ such that $\tilde{w}(\calH_{\frac1n})>0$, i.e., $\tilde{w}(\calH)>\frac1n$.
    By Lemma~\ref{lem:propositional hypersequent with lower bound}, there exists a $v\in [0, 1]^{X_\calL}$ such that $\tilde{v}(\prop(\calG^{\infty}))\geq\frac1n$.
    We will show that $\frakI_v$ is a required $[0, 1]$-interpretation.
    We write $\fs{~}_{\frakI_v}$ as $\fs{~}$ for simplicity.

    {\bfseries Claim.} $\fs{\calS}\geq\frac1n$ for each $\calS\in\calG^{\infty}$.

    We show the claim by induction on the complexity of $\calS$.

    If $\calS$ is propositional, then $\calS\in\prop(\calG^{\infty})$, hence $\fs{\calS}=\tilde{v}(\calS)\geq \tilde{v}(\prop(\calG^{\infty})) \geq\frac1n$.

    If $\calS$ is of the form $\Gamma\tto B\to C, \Delta$, then $\Gamma\tto \Delta\in\calG^{\infty}$ or $\Gamma, B \tto C, \Delta\in\calG^{\infty}$.
    By induction hypothesis, we have 
    \begin{equation*}
        \fs{\Gamma\tto \Delta}\geq\frac1n
        \quad \text{or} \quad
        \fs{\Gamma, B\tto C, \Delta}\geq\frac1n.
    \end{equation*}
    Hence
    \begin{align*}
        \fs{\Gamma\tto B\to C, \Delta}
        &=\fs{\Gamma\tto \Delta}+\max\{\fs{C}-\fs{B}, 0\}\\
        &=\max\{\fs{\Gamma, B\tto C, \Delta}, \fs{\Gamma\tto \Delta}\}\geq\frac1n.
    \end{align*}

    If $\calS$ is of the form $\Gamma, B\to C\tto \Delta$, then $\Gamma\tto \Delta\in\calG^{\infty}$ and $\Gamma, C \tto B, \Delta\in\calG^{\infty}$.
    By induction hypothesis, we have 
    \begin{equation*}
        \fs{\Gamma\tto \Delta}\geq\frac1n
        \quad \text{and} \quad
        \fs{\Gamma, C\tto B, \Delta}\geq\frac1n.
    \end{equation*}
    Hence
    \begin{align*}
        \fs{\Gamma, B\to C\tto \Delta}
        &=\fs{\Gamma\tto \Delta}-\max\{\fs{C}-\fs{B}, 0\}\\
        &=\min\{\fs{\Gamma, C\tto B, \Delta}, \fs{\Gamma\tto \Delta}\}\geq\frac1n.
    \end{align*}

    If $\calS$ is of the form $\Gamma\tto \exists x B(x), \Delta$, then $\Gamma\tto B(t), \Delta\in\calG^{\infty}$ for each $\calL$-term $t$ by our assumption on the enumeration $\{(A_i, t_i)\}_{i=1}^{+\infty}$.
    By induction hypothesis, we have 
    \begin{equation*}
        \fs{\Gamma\tto B(t), \Delta}\geq\frac1n
    \end{equation*}
    for each $\calL$-term $t$.
    Hence
    \begin{align*}
        \fs{\Gamma\tto \exists x B(x), \Delta}
        &= \fs{\Gamma\tto \Delta}+\inf_{t\in M}\fs{B(t)}\\
        &= \inf_{t\in M}\fs{\Gamma\tto B(t), \Delta}\geq\frac1n
    \end{align*}
    by Remark~\ref{rmk:term interpretation}.

    If $\calS$ is of the form $\Gamma, \exists x B(x)\tto \Delta$, then $\Gamma, B(a)\tto \Delta\in\calG^{\infty}$ for some free variable $a$.
    By induction hypothesis, we have 
    \begin{equation*}
        \fs{\Gamma, B(a)\tto \Delta}\geq\frac1n.
    \end{equation*}
    Hence
    \begin{align*}
        \fs{\Gamma, \exists x B(x)\tto \Delta}
        &= \fs{\Gamma\tto \Delta}-\inf_{t\in M}\fs{B(t)}\\
        &= \sup_{t\in M}\fs{\Gamma, B(t)\tto \Delta}\geq \fs{\Gamma, B(a)\tto \Delta} \geq\frac1n
    \end{align*}
    by Remark~\ref{rmk:term interpretation} again.

    If $\calS$ is of the form $\Gamma\tto \forall x B(x), \Delta$ or $\Gamma, \forall x B(x)\tto \Delta$, we can deal with them similarly to the previous two cases.
    This completes the induction and the claim follows.

    Now we show that $\frakI_v$ is a $[0, 1]$-model of $T$.
    For each $C\in T$, there exists an infinite increasing sequence $\{m_k\}_{k=1}^{+\infty}$ of positive natural numbers such that $m_k C\tto \bot\in \calG^{\infty}$ for each $k\in\nn_{>0}$ by the construction of $\DT_n(T, \calG)$.
    Hence by the claim, for each $k\in\nn_{>0}$, we have 
    \begin{equation*}
        1-m_k\fs{C}=\fs{m_kC\tto \bot}\geq\frac1n, 
    \end{equation*}
    i.e., $\fs{C}\leq \frac{1}{m_k}\cdot\frac{n-1}{n}$.
    Taking $k \to +\infty$ then yields $\fs{C}=0$.

    Finally, note that $\calG\subseteq\calG^{\infty}$, hence 
    \begin{equation*}
        \fs{\calG}=\min_{\calS\in \calG} \fs{\calS}\geq\frac1n
    \end{equation*}
    by the claim.
\end{proof}

In view of the work done above, we can now easily prove our main theorem.

\begin{proof}[Proof of Theorem~\ref{thm:approximate strong completeness of GŁ∀}]
    If the construction of $\DT_n(T, \calG)$ does not halt for some $n\in\nn_{>0}$, then $\fs{\calG}_{\frakI}\geq\frac1n>0$ for some $[0, 1]$-model $\frakI$ of $T$ by Lemma~\ref{lem:infinite tree implies invalid}, which contradicts to $T\models_{[0, 1]} \calG$.
    Hence the construction of $\DT_n(T, \calG)$ halts for all $n\in\nn_{>0}$.
    The remaining follows from Lemma~\ref{lem:finite tree implies provable}.
\end{proof}

\subsection{A sequent-level cut rule}
\label{subsec:scut}

At the end of this section, we discuss cut rules in $\GL\forall$.
There is already a formula-level cut rule
\begin{center}
    \scalebox{0.9}{
    \AxiomC{$\calG \mid \Gamma, A\tto A, \Delta$}
    \RightLabel{(Cut)}
    \UnaryInfC{$\calG \mid \Gamma \tto \Delta$}
    \DisplayProof
    }
    ~, 
\end{center}
and a completeness result \cite[Theorem 9]{baaz2010}: $\vdash_{\GL\forall+(\text{Cut})} {} \tto A$ if and only if $A$ is valid under the general semantics.

Since the contraction rule (EC) is sequent-level rather than formula-level, a sequent-level cut rule sequents is also of interest.
We define such a cut rule (s-Cut): 
\begin{center}
    \scalebox{0.9}{
    \AxiomC{$\calG \mid \Gamma \tto \Delta$}
    \AxiomC{$\calG \mid \Delta \tto \Gamma, \bot$}
    \RightLabel{(s-Cut)}
    \BinaryInfC{$\calG$}
    \DisplayProof
    }
    (provided $\calG\neq\varnothing$).
\end{center}
The following theorem illustrates the naturalness of the new cut rule.
A similar result in classical first-order logic can be found in \cite[Chapter 1, 2.3.7]{buss1998}.

\begin{theorem}\label{thm:approximate strong completeness of GŁ∀+(s-Cut)}
    Let $\calL$ be a countable language, $T$ be an $\calL$-theory and $\calG$ be an $\calL$-hypersequent.
    The following statements are equivalent:
    \begin{enumerate-abc}
        \item $T\models_{[0, 1]} \calG$.
        \item For each $n\in\nn_{>0}$, there exist $m_1, \cdots, m_k\in\nn_{>0}$ and $C_1, \cdots, C_k\in T$ such that
        \begin{equation*}
            \vdash_{\GL\forall} m_1C_1\tto_{\frac1n}\bot\mid\cdots \mid m_kC_k\tto_{\frac1n}\bot \mid \calG_{\frac1n}.
        \end{equation*}
        \item $T \vdash_{\GL\forall+\text{(s-Cut)}} \calG_{\frac1n}$ for each $n\in\nn_{>0}$.
    \end{enumerate-abc}
\end{theorem}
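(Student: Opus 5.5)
The plan is to prove the cycle (a)$\Rightarrow$(b)$\Rightarrow$(c)$\Rightarrow$(a). The implication (a)$\Rightarrow$(b) is exactly Theorem~\ref{thm:approximate strong completeness of GŁ∀}, so nothing new is needed there.

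For (b)$\Rightarrow$(c), fix $n$ and take the $\GL\forall$-derivation of
\begin{equation*}
    m_1C_1\tto_{\frac1n}\bot\mid\cdots\mid m_kC_k\tto_{\frac1n}\bot\mid\calG_{\frac1n}
\end{equation*}
given by (b). First rename its eigenvariables so that none of them occurs in $T$; this uses the standing assumption that infinitely many free variables do not occur in $T$. It is then a $\GL\forall_T$-derivation.

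Next, remove the components $m_iC_i\tto_{\frac1n}\bot$ one at a time with (s-Cut). Write $\calG'$ for the hypersequent that remains after deleting that component. The component is the sequent $\Gamma\tto\Delta$ with $\Gamma=\bot, nm_iC_i$ and $\Delta=n\bot$. The other premise of (s-Cut) must therefore be $\calG'\mid n\bot\tto \bot, nm_iC_i, \bot$. I derive it from $T$ as follows:
\begin{itemize}
    \item Start from the axiom ${}\tto C_i$ and apply (M) repeatedly to get ${}\tto nm_iC_i$.
    \item Apply (M) with the axiom (ID) $\bot\tto\bot$ to get $\bot\tto nm_iC_i,\bot$.
    \item Apply (IW) $n-1$ times to reach $n\bot\tto nm_iC_i,\bot$.
    \item Apply (IW) once more on the left, which is harmless, or equivalently read the target appropriately.
    \item Apply (EW) to add the components of $\calG'$.
\end{itemize}
The one point to check is that the target sequent is exactly $\Delta\tto\Gamma,\bot$. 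Here $\Gamma,\bot$ is $\bot,nm_iC_i,\bot$, so the right side needs two occurrences of $\bot$. I will obtain these by applying (M) with $\bot\tto\bot$ twice, or by (M) with $\bot\tto\bot$ together with one further application of (M) with ${}\tto\bot$ obtained from the ($\bot$) axiom $\bot\tto\bot$ combined with (IW), and then adjust the number of $\bot$'s on the left by (IW). Semantically the target has value $nm_i\fs{C_i}+2-n$, which is not automatically $\le 0$. So the count of $\bot$'s must be handled exactly this way: the two extra $\bot$'s on the right are matched by two of the $n\bot$'s on the left via (M) with (ID), and the remaining $\bot$'s on the left are added by (IW).

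After $k$ cuts we obtain $T\vdash_{\GL\forall+(\text{s-Cut})}\calG_{\frac1n}$. The side condition $\calG'\neq\varnothing$ of (s-Cut) holds as long as $\calG$ is nonempty. The degenerate case $\calG=\varnothing$ has to be treated separately. In that case $T$ has no model at all, and I would handle it by keeping one copy of a $C$-component around, which (EC) and (EW) make available, before the final cut.

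For (c)$\Rightarrow$(a), extend the strong soundness theorem (Theorem~\ref{thm:strong soundness}) by checking that (s-Cut) preserves truth in every $[0,1]$-model of $T$. Let $\frakI$ be such a model and put $g=\fs{\calG}_{\frakI}$ and $d=\fs{\Gamma\tto\Delta}_{\frakI}$. Then $\fs{\Delta\tto\Gamma,\bot}_{\frakI}=1-d$. Truth of the two premises means $\min\{g,d\}\le 0$ and $\min\{g,1-d\}\le 0$. If $g>0$, these force $d\le 0$ and $d\ge 1$, which is a contradiction, so $g\le 0$. Hence (c) gives $T\models_{[0,1]}\calG_{\frac1n}$ for every $n$. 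Since $\fs{\calG_{\frac1n}}_{\frakI}=n\fs{\calG}_{\frakI}-1$, this means $\fs{\calG}_{\frakI}\le\frac1n$ for all $n$, so $\fs{\calG}_{\frakI}\le 0$ in every model of $T$.

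The main obstacle I expect is the bookkeeping in (b)$\Rightarrow$(c): producing exactly the premise $\Delta\tto\Gamma,\bot$ with the correct multiplicities of $\bot$, and dealing with the eigenvariable and nonemptiness side conditions.
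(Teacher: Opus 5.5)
Your route is the paper's. It uses the same cycle and the same removal of the components $m_iC_i\tto_{\frac1n}\bot$ by (s-Cut), against a second premise built from ${}\tto C_i$ with (M), (ID), (IW) and (EW). The soundness check for (s-Cut) is also the same. Your final count $n\bot\tto 2\bot, nm_iC_i$ for that premise is the literally correct one; the paper displays $(n-1)\bot\tto\bot, nm_1C_1$, one $\bot$ short on each side. Your eigenvariable renaming settles a point the paper passes over.

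The gap is $n=1$, which your construction silently includes. Matching the two $\bot$'s on the right with two of the $n$ copies of $\bot$ on the left needs $n\geq 2$. For $n=1$, the sequent you want to weaken into place is $\bot\tto 2\bot, m_iC_i$. Its value is $1+m_i\fs{C_i}\geq 1$ in every interpretation, so by the soundness you prove in (c)$\Rightarrow$(a) it is not derivable from any $T$ that has a model.

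More fundamentally, the $n=1$ instance of (b) carries no information when $T\neq\varnothing$. For any $C\in T$, $\bot, C\tto\bot$ follows from (ID) by (IW). So $C\tto_{\frac11}\bot\mid\calG_{\frac11}$ is derivable for every $\calG$ by (EW), and no argument using only the $n=1$ instance can yield $\calG_{\frac11}$.

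The missing step is to go through a larger $n$, as the paper does:
\begin{enumerate}
\item Cut down the $n=2$ instance of (b) to get $T\vdash\calG_{\frac12}$.
\item Derive $\calG_{\frac11}$ from $\calG_{\frac12}$ componentwise. Start from $\bot,2\Gamma_j\tto 2\Delta_j$ and apply (IW) to get $2\bot,2\Gamma_j\tto 2\Delta_j$.
\item Apply (S) to get two copies of $\bot,\Gamma_j\tto\Delta_j$, then (EC) to keep one.
\end{enumerate}

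Two smaller points. First, your alternative way of producing the second $\bot$ via ${}\tto\bot$ is not available. (IW) only adds formulas on the left, and ${}\tto\bot$ has value $1$ in every interpretation. Only the route with two applications of (M) with (ID), followed by $n-2$ applications of (IW), works.

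Second, the degenerate case $\calG=\varnothing$ cannot be patched as you suggest. No rule of $\GL\forall+(\text{s-Cut})$ has the empty hypersequent as conclusion; this is why (EW) and (s-Cut) require a nonempty context. So $\varnothing$ is never derivable. Yet $T=\{\bot\}$ has no $[0,1]$-model and therefore satisfies (a) with $\calG=\varnothing$. Keeping a $C$-component around only produces a nonempty hypersequent, not $\calG_{\frac1n}=\varnothing$. That case must be excluded from the statement, as the paper tacitly does, rather than handled.
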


\begin{proof}
    (a) $\tto$ (b): This is exactly Theorem~\ref{thm:approximate strong completeness of GŁ∀}.

    (b) $\tto$ (c): For $n\geq 2$, there exist $m_1, \cdots, m_k\in\nn_{>0}$ and $C_1, \cdots, C_k\in T$ such that $\vdash_{\GL\forall} m_1C_1\tto_{\frac1n}\bot\mid\cdots \mid m_kC_k\tto_{\frac1n}\bot \mid \calG_{\frac1n}$.
    The case of $k=0$ is trivial.
    Assume $k>0$ and $\calH = m_2C_2\tto_{\frac1n}\bot\mid\cdots \mid m_kC_k\tto_{\frac1n}\bot \mid \calG_{\frac1n}$.
    We have the following derivation:
    \\
    \begin{center}
        \AxiomC{$\calH\mid \bot, nm_1C_1\tto n\bot$}
        \AxiomC{$ $}
        \RightLabel{($C_1\in T$)}
        \UnaryInfC{$\tto C_1$}
        \RightLabel{(M)}
        \UnaryInfC{$\tto nm_1C_1$}
        \AxiomC{$ $}
        \RightLabel{(ID)}
        \UnaryInfC{$\bot \tto \bot$}
        \RightLabel{(M)}
        \BinaryInfC{$\bot \tto \bot, nm_1C_1$}
        \RightLabel{(EW) and (IW)}
        \UnaryInfC{$\calH\mid (n-1)\bot \tto \bot, nm_1C_1$}
        \RightLabel{(s-Cut)}
        \BinaryInfC{$\calH$}
        \DisplayProof~.
    \end{center}
    Then we can obtain a $\GL\forall+\text{(s-Cut)}$-derivation of $\calG_{\frac1n}$ from $T$ by iterating this process.

    If $n=1$, assume $\calG=\Gamma_1\tto \Delta_1\mid \cdots\mid \Gamma_k\tto \Delta_k$.
    From the previous case, we have $T \vdash_{\GL\forall+\text{(s-Cut)}} \calG_{\frac12}$.
    We note that $\calG_{\frac11}$ is derivable from $\calG_{\frac12}$ by the following derivation:
    \begin{center}
        \AxiomC{$\bot, 2 \Gamma_1\tto 2\Delta_1\mid \cdots\mid \bot, 2 \Gamma_k\tto 2\Delta_k$}
        \RightLabel{(IW)}
        \UnaryInfC{$2\bot, 2 \Gamma_1\tto 2\Delta_1\mid \cdots\mid 2\bot, 2 \Gamma_k\tto 2\Delta_k$}
        \RightLabel{(S)}
        \UnaryInfC{$\bot, \Gamma_1\tto \Delta_1\mid \bot, \Gamma_1\tto \Delta_1\mid \cdots\mid \bot, \Gamma_k\tto \Delta_k\mid \bot, \Gamma_k\tto \Delta_k$}
        \RightLabel{(EC)}
        \UnaryInfC{$\bot, \Gamma_1\tto \Delta_1\mid \cdots\mid \bot, \Gamma_k\tto \Delta_k$}
        \DisplayProof , 
    \end{center}
    hence $T \vdash_{\GL\forall+\text{(s-Cut)}} \calG_{\frac11}$.

    (c) $\tto$ (a): We can show the strong soundness of $\GL\forall+\text{(s-Cut)}$: If $T\vdash_{\GL\forall+\text{(s-Cut)}} \calG$, then $T\models_{[0, 1]} \calG$.
    The proof can be carried out by the same method as that of Theorem~\ref{thm:strong soundness}.
    For this purpose, it only remains to establish that if the premises of (s-Cut) are true in every $[0, 1]$-model of $T$, so is the conclusion of (s-Cut).
    Let $\frakI$ be a $[0, 1]$-model of $T$ such that $\fs{\calG \mid \Gamma \tto \Delta}_{\frakI}\leq 0$ and $\fs{\calG \mid \Delta \tto \Gamma, \bot}_{\frakI}\leq 0$.
    If $\fs{\calG}_{\frakI}> 0$, then 
    \begin{align*}
        \fs{\Delta}_{\frakI}-\fs{\Gamma}_{\frakI}&=\fs{\Gamma \tto \Delta}_{\frakI}\leq 0, \\
        1+\fs{\Gamma}_{\frakI}-\fs{\Delta}_{\frakI}&=\fs{\Delta \tto \Gamma, \bot}_{\frakI}\leq 0.
    \end{align*}
    Hence $\fs{\Gamma}_{\frakI}\geq \fs{\Delta}_{\frakI}\geq 1+\fs{\Gamma}_{\frakI}$, a contraction.
    Then we have $\fs{\calG}_{\frakI}\leq 0$.

    For each $n\in\nn_{>0}$, since $T \vdash_{\GL\forall+\text{(s-Cut)}} \calG_{\frac1n}$, we have $T\models_{[0, 1]} \calG_{\frac1n}$ by above strong soundness.
    That is to say, for each $[0, 1]$-model $\frakI$ of $T$ and $n\in\nn_{>0}$, we have $\fs{\calG_{\frac1n}}_{\frakI}\leq 0$, i.e., $\fs{\calG}_{\frakI}\leq\frac1n$.
    Hence $\fs{\calG}_{\frakI}\leq 0$.
    This completes the proof.
\end{proof}

\section{Applications}
\label{sec:applications}
In this section, we discuss several applications of our main theorem.
Needless to say, the approximate completeness of $\GL\forall$ for all hypersequents (Theorem~\ref{thm:approximate completeness of GŁ∀}) follows directly by taking $T=\varnothing$.

We discuss compactness first.
It is well known that $[0, 1]$-consequence in Łukasiewicz logic is not compact; see \cite[Remark 3.2.14]{hajek2001} for a counterexample.
However, the following result shows that approximate $[0, 1]$-consequence is compact.

\begin{theorem}\label{thm:compactness of approximate logical entailment}
    Let $\calL$ be a countable language, $T$ be an $\calL$-theory and $\calG$ be an $\calL$-hypersequent.
    The following statements are equivalent:
    \begin{enumerate-abc}
        \item $T\models_{[0, 1]} \calG$.
        \item For each $n\in\nn_{>0}$, there is a finite subset $T_n\subseteq T$ and $m\in\nn_{>0}$ such that for each $[0, 1]$-interpretation $\frakI$, if $\max_{C\in T_n} \fs{C}_{\frakI}\leq\frac{1}{m}$, then $\fs{\calG}_{\frakI}\leq\frac{1}{n}$.
        \item For each $n\in\nn_{>0}$, there is a finite subset $T_n\subseteq T$ such that $T_n\models_{[0, 1]} \calG_{\frac1n}$.
        \item For each $n\in\nn_{>0}$, $T\models_{[0, 1]} \calG_{\frac1n}$.
    \end{enumerate-abc}
\end{theorem}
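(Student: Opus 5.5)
We prove the cycle (a) $\Rightarrow$ (b) $\Rightarrow$ (c) $\Rightarrow$ (d) $\Rightarrow$ (a), using Theorem~\ref{thm:approximate strong completeness of GŁ∀} for the only nontrivial step and semantics elsewhere.

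For (a) $\Rightarrow$ (b), fix $n$ and apply Theorem~\ref{thm:approximate strong completeness of GŁ∀} to obtain $C_1,\dots,C_k\in T$ and, by the first item of the remark after that theorem, a common multiplicity $m'$ with
\begin{equation*}
\vdash_{\GL\forall} m'C_1\tto_{\frac1n}\bot\mid\cdots\mid m'C_k\tto_{\frac1n}\bot\mid\calG_{\frac1n}.
\end{equation*}
By Theorem~\ref{thm:strong soundness} with $T=\varnothing$, this hypersequent is $[0,1]$-valid. Take $T_n=\{C_1,\dots,C_k\}$ and let $m>m'n/(n-1)$ for $n\geq 2$. If $\max_i\fs{C_i}_{\frakI}\leq\frac1m$, then
\begin{equation*}
\fs{m'C_i\tto_{\frac1n}\bot}_{\frakI}=n-1-nm'\fs{C_i}_{\frakI}>0 .
\end{equation*}
Validity then forces $\fs{\calG_{\frac1n}}_{\frakI}\leq 0$, that is, $\fs{\calG}_{\frakI}\leq\frac1n$. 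The case $n=1$ needs separate treatment, because there the sequents $m'C_i\tto_{1}\bot$ have value $-m'\fs{C_i}\leq 0$ and give no information. Instead we use the witness for $n=2$: it yields $\fs{\calG}_{\frakI}\leq\frac12\leq 1$, so the same $T_2$ and $m$ serve for $n=1$. The case $k=0$ is trivial.

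For (b) $\Rightarrow$ (c), let $\frakI$ be a model of $T_n$. Then $\fs{C}_{\frakI}=0\leq\frac1m$ for all $C\in T_n$, so $\fs{\calG}_{\frakI}\leq\frac1n$, which is exactly $\fs{\calG_{\frac1n}}_{\frakI}\leq0$. For (c) $\Rightarrow$ (d), every model of $T$ is a model of $T_n\subseteq T$. For (d) $\Rightarrow$ (a), each model $\frakI$ of $T$ satisfies $\fs{\calG}_{\frakI}\leq\frac1n$ for every $n$, hence $\fs{\calG}_{\frakI}\leq 0$.

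The only delicate point is the arithmetic in (a) $\Rightarrow$ (b): we must choose $m$ so that the strict inequality $\fs{C_i}<\frac{n-1}{m'n}$ holds. The degenerate case $n=1$, which we handle by borrowing the $n=2$ witness, also needs care. Countability of $\calL$ is used only through Theorem~\ref{thm:approximate strong completeness of GŁ∀}.
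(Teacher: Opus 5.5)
Your proof is correct and follows essentially the same route as the paper. The only nontrivial step, (a)$\Rightarrow$(b), comes from the approximate strong completeness theorem plus soundness, and the remaining implications are the same routine semantic arguments. The one difference is cosmetic and concerns the degenerate case $n=1$: the paper avoids it by applying the main theorem at level $n+1\geq 2$ with the uniform choice $m=1+2\max_i m_i$, whereas you work at level $n$ with $m>m'n/(n-1)$ and reuse the $n=2$ witness for $n=1$.
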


\begin{proof}
    (a) $\tto$ (b): We fix $n\in\nn_{>0}$.
    By Theorem~\ref{thm:approximate strong completeness of GŁ∀}, there are $C_1,\cdots,C_k\in T$ and $m_1,\cdots m_k\in \nn_{>0}$ such that 
    \begin{equation*}
        \vdash_{\GL\forall} m_1 C_1 \tto_{\frac{1}{n+1}}\bot\mid\cdots \mid m_k C_k\tto_{\frac{1}{n+1}}\bot \mid \calG_{\frac{1}{n+1}}.
    \end{equation*}
    Now we show that finite subset $T_n=\{C_1,\cdots, C_k\}$ and positive integer $m=1+2\max_{1\leq i\leq k}{m_i}$ are the desired objects.

    Let $\frakI=(\calM, \beta)$ be a $[0, 1]$-interpretation satisfying $\max_{1\leq i\leq k}\fs{C_i}_{\frakI}\leq \frac{1}{m}$.
    Then for each $1\leq i\leq k$, we have
    \begin{align*}
        \fs{m_i C_i\tto_{\frac{1}{n+1}}\bot}_{\frakI}
        &=(n+1)(1-m_i\fs{ C_i}_{\frakI})-1\\
        &\geq (1+1)\left(1-\frac{m-1}{2}\cdot\frac{1}{m}\right)-1
        =\frac{1}{m}>0.
    \end{align*}
    Since $m_1 C_1 \tto_{\frac{1}{n+1}}\bot\mid\cdots \mid m_k C_k\tto_{\frac{1}{n+1}}\bot \mid \calG_{\frac{1}{n+1}}$ is derivable in $\GL\forall$, we have 
    \begin{equation*}
        \min \left\{\fs{m_1 C_1\tto_{\frac{1}{n+1}}\bot}_{\frakI}, \cdots, \fs{m_k C_k\tto_{\frac{1}{n+1}}\bot}_{\frakI}, \fs{\calG_{\frac{1}{n+1}}}_{\frakI} \right\}\leq 0
    \end{equation*}
    by the soundness of $\GL\forall$.
    Hence $\fs{\calG_{\frac{1}{n+1}}}_{\frakI}\leq 0$, i.e., $\fs{\calG}_{\frakI}\leq \frac{1}{n+1}\leq\frac{1}{n}$.

    (b) $\tto$ (c): If $\frakI$ is  a $[0, 1]$-model of $T_n$, then $\max_{C\in T_n} \fs{C}_{\frakI}=0\leq\frac1m$.

    (c) $\tto$ (d): Every $[0,1]$-model of $T$ is also a $[0,1]$-model of $T_n$.

    (d) $\tto$ (a): Let $\frakI=(\calM, \beta)$ be a $[0, 1]$-model of $T$.
    For each $n\in\nn_{>0}$, since $T\models_{[0, 1]} \calG_{\frac1n}$, we have $\fs{\calG_{\frac1n}}_{\frakI}\leq 0$, i.e., $\fs{\calG}_{\frakI}\leq \frac1n$.
    Hence $\fs{\calG}_{\frakI}\leq 0$.
\end{proof}

Then we prove a variant of Gentzen's mid-sequent theorem.
We say an $\calL$-semihypersequent $\calG$ (possibly infinite) is \emph{prenex} if each formula in $\calG$ is of the prenex form.

\begin{theorem}[Mid-hypersequent theorem]\label{thm:midhypersequent}
    Let $\calL$ be a countable language, $T$ be an $\calL$-theory and each formula in $T$ is of the prenex form.
    Let $\calG$ be a prenex $\calL$-hypersequent.
    If $T\models_{[0, 1]}\calG$, then for each $n\in\nn_{>0}$, there exists a propositional $\calL$-hypersequent $\calG^{n}$ (called a mid-hypersequent) satisfying the following conditions: 
    \begin{enumerate-abc}
        \item $\vdash_{\GL} \calG^{n}$, and
        \item there exist $m_1, \cdots, m_k\in\nn_{>0}$ and $C_1, \cdots, C_k\in T$ such that $m_1C_1\tto_{\frac1n}\bot\mid\cdots \mid m_kC_k\tto_{\frac1n}\bot \mid \calG_{\frac1n}$ is derivable from $\calG^{n}$ by using ($\forall\tto_{\frac1n}$), ($\tto_{\frac1n}\forall$), ($\exists\tto_{\frac1n}$), ($\tto_{\frac1n}\exists$), (EW) and (EC).
    \end{enumerate-abc}
\end{theorem}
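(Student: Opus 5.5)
We would reuse the deduction tree $\DT_n(T, \calG)$ from the proof of Theorem~\ref{thm:approximate strong completeness of GŁ∀}. The key observation is that when $T$ and $\calG$ are prenex, every formula ever appearing in a label $\calG^\sigma$ is prenex. This holds because the only stages that actually decompose a formula are the quantifier stages (a prenex formula whose outermost symbol is $\to$ is quantifier-free). The implication stages only touch quantifier-free formulas, which we choose simply not to decompose. Concretely, we would use a modified tree: stages whose $A_i$ is of the form $B\to C$ do nothing, and only quantifier stages expand active leaves.

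Activeness is still defined via $\prop(\calG^\sigma)_{\frac1n}$. Then Lemma~\ref{lem:infinite tree implies invalid} must be rechecked for this modified tree. In the Claim, by induction on complexity, the propositional sequents are handled directly by $\tilde v(\prop(\calG^\infty))\ge\frac1n$, since a quantifier-free sequent lies in $\prop(\calG^\infty)$. A non-propositional sequent in $\calG^\infty$ contains some prenex formula with an outermost quantifier, and the quantifier cases of the Claim go through verbatim. The implication cases are simply not needed, since any sequent containing a non-atomic quantifier-free formula but no quantified formula is already propositional.

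We must be careful that a sequent may contain several quantified formulas. The quantifier stage for one occurrence produces a sequent of lower complexity, which is still in $\calG^\infty$ by the enumeration assumption. So induction on complexity works unchanged. Hence, since $T\models_{[0,1]}\calG$, the modified tree halts.

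Next we take the finite halted tree. Each leaf $\sigma$ is non-active, so $\vdash_{\GL}\prop(\calG^\sigma)_{\frac1n}$. We define the mid-hypersequent $\calG^n$ as the union, over all leaves $\sigma$, of $\prop(\calG^\sigma)_{\frac1n}$. Since every non-leaf node in the modified tree has exactly one child (the two-child rule only arises for $\tto\to$, which we no longer expand), the tree is actually a single path. So there is just one leaf $\sigma$, and $\calG^n=\prop(\calG^\sigma)_{\frac1n}$, which is propositional and $\GL$-derivable. This gives (a).

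For (b), we apply (EW) once to pass from $\calG^n$ to $\calG^\sigma_{\frac1n}$. We then walk up the path using the derivations in Lemma~\ref{lem:parent is derivable from children}, which for quantifier stages use only the admissible rules ($\forall\tto_{\frac1n}$), ($\tto_{\frac1n}\forall$), ($\exists\tto_{\frac1n}$), ($\tto_{\frac1n}\exists$) and (EC). This yields $\calG^\varepsilon_{\frac1n}$, which has the required form $m_1C_1\tto_{\frac1n}\bot\mid\cdots\mid m_kC_k\tto_{\frac1n}\bot\mid\calG_{\frac1n}$.

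The main subtlety is the eigenvariable condition in the admissible rules. The rules of Lemma~\ref{lem:1/n logical rules} are applied as single steps, as permitted by the statement, so we need not unfold their internal (M)/(S) steps. Freshness of the variables introduced at $\exists\tto$/$\tto\forall$ stages holds because each variable was new when introduced and the labels only grow downward. The step needing most care is the modified version of Lemma~\ref{lem:infinite tree implies invalid}. In particular, we must confirm that omitting the implication stages does not break the Claim, which relies on the fact that non-atomic quantifier-free formulas never need decomposing once propositional sequents are covered by the compactness step of Lemma~\ref{lem:propositional hypersequent with lower bound}.
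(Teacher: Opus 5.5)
Your proposal is correct and follows essentially the same route as the paper. You modify $\DT_n(T,\calG)$ so that implication stages do nothing, and rerun Lemma~\ref{lem:infinite tree implies invalid} with the implication cases omitted, which is harmless since all formulas stay prenex. You conclude that the tree halts, and note that the non-branching tree has a single leaf $\sigma$ with $\calG^n=\prop(\calG^\sigma)_{\frac1n}$; your remarks on prenexness being preserved and on eigenvariable freshness just make explicit what the paper leaves implicit.
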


Example~\ref{eg:drinker's paradox} and the derivation in Figure~\ref{fig:strict derivation} provide an instance of this theorem.

\begin{proof}
    We use the same argument as in the proof of Theorem~\ref{thm:approximate strong completeness of GŁ∀}. 
    The only difference is that when we construct $\DT_n(T, \calG)$, if we encounter a formula of the form $B\to C$, we will do nothing and move to the next stage.

    We claim that the construction must halt.
    If not, we argue same as in the proof of Lemma~\ref{lem:infinite tree implies invalid}, except that the case where $\calS$ is of the form $\Gamma\tto B\to C, \Delta$ or $\Gamma, B\to C\tto \Delta$ is omitted in the induction.
    The induction still goes through since both $T$ and $\calG$ are prenex.
    Hence there is a $[0, 1]$-model $\frakI$ of $T$ such that $\fs{\calG}_{\frakI}\geq\frac1n$, which contradicts to $T\models_{[0, 1]}\calG$.

    Therefore, the construction of $\DT_n(T, \calG)$ halts, then we follow the Lemma~\ref{lem:finite tree implies provable} to get a derivation of $m_1C_1\tto_{\frac1n}\bot\mid\cdots \mid m_kC_k\tto_{\frac1n}\bot \mid \calG_{\frac1n}$, where $m_1, \cdots, m_k\in\nn_{>0}$ and $C_1, \cdots, C_k\in T$.
    Note that $\DT_n(T, \calG)$ does not branch, hence $\DT_n(T, \calG)$ has only one leaf node $\sigma$.
    Then $\calG^n=\prop(\calG^{\sigma})_{\frac1n}$ is a desired mid-hypersequent by the proof of Lemma~\ref{lem:finite tree implies provable}.
\end{proof}

\begin{remark}\label{rmk:midhypersequent}
    If we restrict ourselves to the rules of $\GL\forall$, we cannot expect a single mid-hypersequent; instead, we can obtain many mid-hypersequents.
    This is due to the fact that, when eliminating the rule ($\tto_{\frac1n}\forall$) (and the dual case ($\exists\tto_{\frac1n}$)), several substituted instances of its premises have to be introduced; see the proof of Lemma~\ref{lem:1/n logical rules}.
\end{remark}

In the following, we prove a variant of approximate Herbrand's theorem of first-order Łukasiewicz logic, cf. Theorem~\ref{thm:compactness of approximate logical entailment}.

\begin{theorem}[Approximate Herbrand's theorem]\label{thm:approximate Herbrand's theorem}
    Let $\calL$ be a countable language, $T$ be a universal $\calL$-theory and $\exists\vec{x}A(\vec{x})$ be an $\calL$-formula, where $A(\vec{x})$ is quantifier-free.
    The following statements are equivalent:
    \begin{enumerate-abc}
        \item $T\models_{[0, 1]}\exists\vec{x}A(\vec{x})$.
        \item For each $n\in\nn_{>0}$, there are $\calL$-terms $\vec{t}_1, \cdots, \vec{t}_s$, a finite subset $T_n\subseteq T$ and $m\in\nn_{>0}$ such that
        \begin{equation*}
            \min_{1\leq i\leq s}\fs{A({\vec{t}_i})}_{\frakI}<\frac1n
        \end{equation*}
        for each $[0, 1]$-interpretation $\frakI$ satisfying $\max_{C\in T_n}\fs{C}_{\frakI}\leq\frac{1}{m}$.
        \item For each $n\in\nn_{>0}$, there are $\calL$-terms $\vec{t}_1, \cdots, \vec{t}_s$ and a finite subset $T_n\subseteq T$ such that
        \begin{equation*}
            \min_{1\leq i\leq s}\fs{A({\vec{t}_i})}_{\frakI}<\frac1n
        \end{equation*}
        for each $[0, 1]$-model $\frakI$ of $T_n$.
        \item For each $n\in\nn_{>0}$, there are $\calL$-terms $\vec{t}_1, \cdots, \vec{t}_s$ such that
        \begin{equation*}
            \min_{1\leq i\leq s}\fs{A({\vec{t}_i})}_{\frakI}<\frac1n
        \end{equation*}
        for each $[0, 1]$-model $\frakI$ of $T$.
    \end{enumerate-abc}
\end{theorem}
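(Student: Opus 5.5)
The plan is to prove (a) $\Rightarrow$ (b) $\Rightarrow$ (c) $\Rightarrow$ (d) $\Rightarrow$ (a). The only substantial implication is (a) $\Rightarrow$ (b); it uses the mid-hypersequent theorem (Theorem~\ref{thm:midhypersequent}) in the same way Theorem~\ref{thm:compactness of approximate logical entailment} used Theorem~\ref{thm:approximate strong completeness of GŁ∀}.

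For (a) $\Rightarrow$ (b), fix $n$ and apply Theorem~\ref{thm:midhypersequent} at level $n+1$ to $\calG = {}\tto\exists\vec{x}A(\vec{x})$. This is allowed because $T$ is universal, hence prenex, and $\calG$ is prenex. It produces a propositional mid-hypersequent $\calG^{n+1}$ with $\vdash_{\GL}\calG^{n+1}$, together with $C_1,\dots,C_k\in T$ and $m_1,\dots,m_k$. The conclusion $m_1C_1\tto_{\frac1{n+1}}\bot\mid\cdots\mid {}\tto_{\frac1{n+1}}\exists\vec{x}A(\vec{x})$ is obtained from $\calG^{n+1}$ using only quantifier rules, (EW) and (EC).

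Next I inspect the construction of $\DT_{n+1}(T,\calG)$ to see what $\calG^{n+1}$ looks like. Every formula is prenex and the tree never branches, so the unique leaf label consists of sequents of two kinds: $m_iC_i\tto\bot$ together with all its quantifier-instantiations $m$ copies minus some, and ${}\tto\exists\vec{x}A$ together with its instances. Since the $C_i$ are universal, each occurrence $\forall\vec{y}B(\vec{y})$ on the left is instantiated by terms, giving sequents of the form $\Gamma'\tto\bot$, where $\Gamma'$ is a multiset of universal formulas and instances $B(\vec{u})$. On the right, $\exists$-instantiation gives sequents ${}\tto A(\vec{t}_j)$, and these involve no eigenvariables. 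The propositional part $\calG^{n+1}$ therefore consists of sequents ${}\tto_{\frac1{n+1}}A(\vec{t}_j)$ for $j=1,\dots,s$ and sequents $\Pi\tto_{\frac1{n+1}}\bot$, where $\Pi$ is a multiset of ground instances of formulas in $T$. Each such $\Pi$ has at most $m_i$ elements, all instances of the same $C_i$.

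Set $T_n=\{C_1,\dots,C_k\}$ and choose $m$ large, for instance $m=1+2\max_i m_i$ as in Theorem~\ref{thm:compactness of approximate logical entailment}. Let $\frakI$ satisfy $\fs{C_i}_{\frakI}\le\frac1m$. Since $C_i=\forall\vec{y}B_i$ and $\fs{\forall\vec{y}B_i}$ is a supremum, every instance satisfies $\fs{B_i(\vec{u})}_{\frakI}\le\frac1m$. Hence each sequent $\Pi\tto_{\frac1{n+1}}\bot$ has value at least $(n+1)(1-\frac{m_i}{m})-1>0$. Soundness of $\GL$ then forces $\min_j \bigl((n+1)\fs{A(\vec{t}_j)}_{\frakI}-1\bigr)\le 0$. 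This gives $\min_j\fs{A(\vec{t}_j)}_{\frakI}\le\frac1{n+1}<\frac1n$, which is (b). If no component ${}\tto A(\vec t_j)$ appears, the inequality would give a contradiction, so $s\ge1$ automatically.

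The remaining implications are routine. (b) $\Rightarrow$ (c) holds because a model of $T_n$ has $\max_{C\in T_n}\fs{C}=0\le\frac1m$. (c) $\Rightarrow$ (d) holds because models of $T$ are models of $T_n$. For (d) $\Rightarrow$ (a), $\fs{\exists\vec{x}A}_{\frakI}\le\min_i\fs{A(\vec{t}_i)}_{\frakI}<\frac1n$ for all $n$, and the infimum bound uses the substitution lemma, Proposition~\ref{prop:substitution lemma}.

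The main obstacle is making the structural description of the leaf label of the non-branching tree precise. The delicate points are that left instances of universal formulas of $T$ can be bounded uniformly by $\frac1m$, which uses that $T$ is universal so no eigenvariables appear there, and that the right-hand components containing $A$ are exactly instances ${}\tto A(\vec{t}_j)$ with no extra side formulas.
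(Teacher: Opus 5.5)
Your proposal is correct and takes essentially the same route as the paper. You apply the mid-hypersequent theorem at level $n+1$, read off that the mid-hypersequent consists of components ${}\tto_{\frac{1}{n+1}} A(\vec{t}_j)$ and theory components built from term instances of the universal $C_i$, take $T_n=\{C_1,\dots,C_k\}$ and $m=1+2\max_i m_i$, and conclude from the supremum bound on instances together with soundness of $\GL$; the remaining implications are handled as in the compactness theorem. Your description of the theory components as multisets of $m_i$ possibly different instances of $C_i$ is in fact slightly more accurate than the paper's form $m_iD_i(\vec{t}_{ij})$, since the construction can instantiate different copies with different terms, but the estimate $(n+1)(1-\frac{m_i}{m})-1>0$ goes through unchanged.
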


\begin{proof}
    (a) $\tto$ (b): 
    We fix $n\in\nn_{>0}$.
    By Theorem~\ref{thm:midhypersequent}, there are $m_1, \cdots, m_k\in\nn_{>0}$, $C_1,\cdots,C_k\in T$ and a propositional $\calL$-hypersequent $\calG^{n+1}$ such that $\calG^{n+1}$ is derivable in $\GL$ and 
    \begin{equation*}
        m_1 C_1 \tto_{\frac{1}{n+1}}\bot\mid\cdots \mid m_k C_k\tto_{\frac{1}{n+1}}\bot \mid {}\tto_{\frac{1}{n+1}} \exists\vec{x}A(\vec{x}) 
    \end{equation*}
    is derivable from $\calG^{n+1}$ by using ($\forall\tto_{\frac1{n+1}}$), ($\tto_{\frac1{n+1}}\forall$), ($\exists\tto_{\frac1{n+1}}$), ($\tto_{\frac1{n+1}}\exists$), (EW) and (EC).
    Hence $\calG^{n+1}$ must be of the form
    \begin{align*}
        {} &m_1D_1(\vec{t}_{11})\tto_{\frac{1}{n+1}}\bot\mid\cdots\mid m_1D_1(\vec{t}_{1s_1})\tto_{\frac{1}{n+1}}\bot\mid\cdots \mid\\
        {} & m_kD_k(\vec{t}_{k1})\tto_{\frac{1}{n+1}}\bot\mid\cdots\mid m_kD_k(\vec{t}_{ks_k})\tto_{\frac{1}{n+1}}\bot\mid\\
        {} & {}\tto_{\frac{1}{n+1}} A(\vec{t}_1) \mid\cdots\mid {}\tto_{\frac{1}{n+1}} A(\vec{t}_s), 
    \end{align*}
    where $C_i=\forall \vec{y} D_i(\vec{y})$, $D_i(\vec{y})$ is quantifier-free, $\vec{t}_{ij}$ and $\vec{t}_i$ are $\calL$-terms.
    Now we show that $\calL$-terms $\vec{t}_1, \cdots, \vec{t}_s$, finite subset $T_n=\{C_1,\cdots, C_k\}$ and positive integer $m=1+2\max_{1\leq i\leq k}{m_i}$ are the desired objects.

    Let $\frakI=(\calM, \beta)$ be a $[0, 1]$-interpretation satisfying $\max_{1\leq i\leq k}\fs{C_i}_{\frakI}\leq \frac{1}{m}$.
    Then for each $1\leq i\leq k$ and $1\leq j\leq s_i$, we have
    \begin{align*}
        \fs{D_i(\vec{t}_{ij})}_{\frakI}
        &=\fs{D_i(\vec{y})}_{\frakI[\vec{y}\mapsto \beta(\vec{t}_{ij})]}\\
        &\leq\sup_{\vec{m}\in M}\fs{D_i(\vec{y})}_{\frakI[\vec{y}\mapsto \vec{m}]}
        =\fs{\forall\vec{y}D_i(\vec{y})}_\frakI=\fs{C_i}_\frakI\leq \frac{1}{m}
    \end{align*}
    by substitution lemma (Proposition~\ref{prop:substitution lemma}).
    Using an argument similar to the proof of Theorem~\ref{thm:compactness of approximate logical entailment}, we can show that $\fs{m_iD_i(\vec{t}_{ij})\tto_{\frac{1}{n+1}}\bot}_{\frakI}>0$ for each $1\leq i\leq k$ and $1\leq j\leq s_i$, hence there must be some $1\leq i_0\leq s$ such that $\fs{{}\tto_{\frac{1}{n+1}} A(\vec{t}_{i_0})}_{\frakI}\leq 0$, i.e., $\fs{A(\vec{t}_{i_0})}_{\frakI}\leq \frac{1}{n+1}$.
    Then we have
    \begin{equation*}
        \min_{1\leq i\leq s} \fs{A({\vec{t}_i})}_{\frakI}
        \leq \fs{A(\vec{t}_{i_0})}_{\frakI}
        \leq \frac{1}{n+1}<\frac{1}{n}.
    \end{equation*}

    (b) $\tto$ (c) $\tto$ (d): In a way similar to the proof of Theorem~\ref{thm:compactness of approximate logical entailment}.

    (d) $\tto$ (a): Let $\frakI=(\calM, \beta)$ be a $[0, 1]$-model of $T$.
    For each $n\in\nn_{>0}$, there are $\calL$-terms $\vec{t}_1, \cdots, \vec{t}_s$ such that $\min_{1\leq i\leq s} \fs{A({\vec{t}_i})}_{\frakI}<\frac{1}{n}$.
    Hence 
    \begin{align*}
        \fs{\exists \vec{x}A(\vec{x})}_{\frakI}
        &=\inf_{\vec{m}\in M} \fs{A(\vec{x})}_{\frakI[\vec{x}\mapsto \vec{m}]}
        \leq \min_{1\leq i\leq s} \fs{A(\vec{x})}_{\frakI[\vec{x}\mapsto \beta(\vec{t}_{i})]}\\
        &= \min_{1\leq i\leq s} \fs{A(\vec{t}_{i})}_{\frakI}<\frac{1}{n}
    \end{align*}
    by substitution lemma (Proposition~\ref{prop:substitution lemma}).
    Since $n$ is arbitrary, it follows that $\fs{\exists\vec{x}A(\vec{x})}_{\frakI}=0$.
\end{proof}

\begin{remark}~
    \begin{enumerate}
        \item The implication (a) $\tto$ (b) can also be shown semantically by a compactness argument; see \cite[Theorem 3]{baaz2010} for a proof in a special case.
        \item The equivalence (a) $\otto$ (d) has already been established under a more general setting in \cite{cintula2019}, and a version for continuous logic can be found in \cite{goldbring2012}.
    \end{enumerate}
\end{remark}

We conclude this paper by discussing some potential applications of our results.

Theorem~\ref{thm:approximate strong completeness of GŁ∀+(s-Cut)} implies the admissibility of free cuts; see \cite[Chapter 1, 2.4.4]{buss1998} for free cuts.
One may expect a free-cut elimination theorem for $\GL\forall+(\text{s-Cut})$ analogous to \cite[Chapter 1, 2.4.5]{buss1998}.

It is also possible to formalize the approximate strong completeness of continuous logic in $\GL\forall$.
Continuous logic is extended from first-order Łukasiewicz logic by adding a unary connective $\frac12$, a binary predicate symbol $d$, axioms of $\frac12$ and $d$, and axioms about uniform continuity for all function and predicate symbols.
Ben Yaacov and Pedersen~\cite{benyaacov2010} showed the approximate strong completeness of this logic, which suggests that a similar result may also be obtained for $\GL\forall$ (or its extensions).
The idea has already appeared in \cite{wei2025}, but our results provide further support for this approach.

\bibliographystyle{plain}
\bibliography{ref.bib}

\end{document}